\documentclass[11pt]{article}

\usepackage{tesler}

\title{Tesler matrices and Lusztig data}
\author{Ivan Balashov, Constantine Bulavenko, and Yaroslav Molybog}

\begin{document}
\maketitle

\begin{abstract}
    We study asymptotics of Tesler matrices using Kostant pictures, as well as partial orders on these. We show that the Lusztig data partial order on Kostant pictures refines the `merge' partial order on Kostant pictures, and that the merge partial order on Kostant pictures is equivalent to a partial order on Tesler matrices. This equivalence requires integral flow graphs. Using Kostant pictures we find logarithmic asymptotics of some families of Tesler matrices.
\end{abstract}

\section{Introduction}

This paper is broadly concerned with Kostant's partition function (or KPF for short), which counts the number of ways an element of a semigroup known as the positive root cone (in type $A$) can be decomposed (or partitioned) as a sum of distinguished generators known as positive roots. It is divided into three parts:

In \textit{part 1} (\Cref{s:TeslerMatrices}) we introduce Lusztig data and Kostant pictures (\Cref{ss:L+K}), followed by Tesler matrices and integral flows (\Cref{ss:T+I}). 
Each of these sets has cardinality described by the KPF. 
We draw on their connections to answer the question of asymptotics of the number of regular Tesler matrices, improving the previous benchmark, \Cref{Oneill_Conjecture}.
In the process, we find logarithmic asymptotics for a broad family of generalized Tesler matrices (\Cref{ATheo}).

In \textit{part 2} (\Cref{s:PosetStructures}), we compare Tesler matrices and Lusztig data as posets, again relying on integral flows and Kostant pictures. The partial order on Tesler matrices is due to Armstrong, \cite{Armstrong2014}, while the partial order on Lusztig data is due to Lusztig, \cite{lus90}.
We show that the Tesler poset is equivalent to a natural partial order on Kostant pictures which we call merge (\Cref{thm:IntEqKos}). We also prove (\Cref{refinementTheorem}) that these posets are refined by the Lusztig data poset. 
Lusztig data arises naturally in the representation theory of quantum groups and the theory of crystal bases. In particular, ``multisegments'', which have been shown to model the $B(\infty)$ crystal \cite{claxton2015youngtableauxmultisegmentspbw}, are essentially Kostant pictures rotated 90 degrees.
An important question, which we do not explore here, is whether this inherited crystal structure can be leveraged to study Tesler matrices in their more representation theoretic context --- the diagonal harmonics module.

One goal for the future is to better understand KPF posets. With this in mind, in \textit{part 3} (\Cref{s:ProbabilisticTesler}), we study Markov chains in the Tesler poset. The statistics that arise can give us characteristic values for each poset structure. In addition, we consider ideas to partially generate the poset in order to reduce computing time, while still estimating certain values, for example the  M\"obius function, which can be used to study asymptotics \cite{ONeill}.

\section{Tesler matrices, Lusztig data, and their asymptotics}
\label{s:TeslerMatrices}
\subsection{Lusztig data and Kostant's partition function}
\label{ss:L+K}
First of all, we shall define roots and Lusztig data.
\begin{definition}
    Let $\left(e_1,\,e_2,\,...,\, e_{n+1}\right)$ be the standard vector basis of $\R^{n+1}$. A sum $\sum c_{ij}\alpha_{ij} = \varnu$, where $\alpha_{ij}=e_i-e_{j+1},\,1\leq i\leq j\leq n$ is called a \new{weight}. We will sometimes abbreviate $\alpha_{ii}=\alpha_i$. We consider weights $\varnu$ with $c_{ij}\in\N$. 
    A \new{Lustzig datum} for $\varnu$
    is any tuple
    $$
        \mathbf{a}=\left(a_{11},\,a_{12},\,...,\,a_{1n},\,a_{22},\,a_{23},\,...,\,a_{2n},\,...,\,a_{nn}\right) \in \N^{\binom{n+1}{2}}
    $$
    such that $\sum a_{ij} \alpha_{ij}$ is again $\varnu$. This order on coefficients is the standard order; it corresponds to the standard reduced word for the longest element of $S_{n+1}$, the symmetric group on $n+1$ elements. 
    The set of all possible Lusztig data for a given $\varnu$ is denoted as $A\left(\varnu\right)$.
\end{definition}
A more intuitive representation of a decomposition of a weight $\varnu$ due to \cite{AnKo} is as follows.
\begin{definition}
    A \new{Kostant picture} for $\varnu=\sum c_{ij}\alpha_{ij}$ is a diagram on $n$ ordered vertices. In it, each term $\alpha_{ij}$ is represented by a loop around (or a bar above) vertices $i$ through $j$. The set of all possible Kostant pictures for a given $\varnu$ is denoted by $\K\left(\varnu\right)$, while their number $\KPF\left(\varnu\right)$ defines the \new{Kostant partition function}.
\end{definition}
\begin{example}
    For clarity, we shall tabulate every Lusztig data and its corresponding Kostant picture for $\varnu=\left(1,\,1,\,-1,\,-1\right)$ in \Cref{iLTable}.
\begin{table}[h]
\centering
\begin{tabular}{ >{\centering\arraybackslash}p{0.22\textwidth}  >{\centering\arraybackslash}p{0.22\textwidth} >{\centering\arraybackslash}p{0.22\textwidth}}
\hline
 Lusztig data & Kostant picture $\varnu$ & Decomposition \\ 
\hline & & \\[-1em] 
$\left(1,\,0,\,0,\,2,\,0,\,1\right)$&$\dash{*}\:\dash{\dash{*}}\:\dash{*}$ & $(\alpha_1) + 2 (\alpha_2) + (\alpha_3)$ \\
$\left(0,\,1,\,0,\,1,\,0,\,1\right)$&$\dash{*\:}\dash{\dash{*}}\:\dash{*}$ & $(\alpha_{12}) + (\alpha_2) + (\alpha_3)$ \\
$\left(1,\,0,\,0,\,1,\,1,\,0\right)$&$\dash{*}\:\dash{\dash{*}}\dash{\:*}$ & $(\alpha_1) + (\alpha_2) + (\alpha_{23})$ \\
$\left(0,\,1,\,0,\,0,\,1,\,0\right)$&$\dash{*\:}\dash{\dash{*}\:*}$ & $(\alpha_{12}) + (\alpha_{23})$ \\
$\left(0,\,0,\,1,\,1,\,0,\,0\right)$&$\dash{*\:}\dash{\dash{*}}\dash{\:*}$ & $(\alpha_{13}) + (\alpha_2)$ \\
  \hline
\end{tabular}
\caption{$A\left(\varnu\right)$ and $\K\left(\varnu\right)$ for $\varnu = \left(1,\,1,\,-1,\,-1\right)$}
\label{iLTable}
\end{table}

It will be convenient for us to describe Kostant pictures not by their weight $\varnu$, but their \new{height} $\eta = (\eta_1,\,\dots,\,\eta_n)$ where $ \eta_i$ are such that $\varnu = \sum\eta_i \alpha_i$. 
Note that if $\varnu = (\varnu_1,\dots,\varnu_{n+1})\in\R^{n+1}$ then $\eta=\left(\varnu_1,\,\varnu_1+\varnu_2,\,...,\,\sum_{k=1}^{n}{\varnu_k}\right)$. We denote the set of Kostant pictures of height $\eta$ by $K[\eta]$ putting height in square brackets. For example, we have $\K\left(1,\,1,\,-1,\,-1\right)=\K\left[1,\,2,\,1\right]$.
\end{example}
\subsection{Tesler matrices and integral flows}
\label{ss:T+I}
Now, we shall define Tesler matrices. Let $U_n$ be the set of $n\times n$ upper-triangular matrices with non-negative integer entries.
\begin{definition}
    For a matrix $A = \left(a_{ij}\right) \in U_n$ we define its $k$\textsuperscript{th} \new{hook sum} $h_k,\,1\leq k\leq n$ as
    $$
        h_k=\sum_{i=k}^n{a_{ki}}-\sum_{i=1}^{k-1}{a_{ik}}
    $$ 
    and its \new{hook sum vector} as $\mathbf{ h}=\left(h_1,\,h_2,\,...,\,h_n\right)$.
\end{definition}
\begin{definition}
    The set of all $U_n$ matrices with given hook sum vector $\mathbf{h}$ is called the set of \new{(generalized) Tesler matrices with hook sum $\mathbf h$} and is denotes as $\T\left(\mathbf{h}\right)$.
\end{definition}
\begin{example}\label{Tex}
    When the hook sum is $\left(1,\,1,\,...,\,1\right)=\mathbf{1}^n$, then $\T\left(\mathbf{1}^n\right)$ is called the set of \new{regular Tesler matrices}. For $n=2$ we have only two regular Tesler matrices:
\begin{align*}
\begin{bmatrix}
1 & 0 \\
& 1 
\end{bmatrix}\;\;\textsc{and}\;\;
\begin{bmatrix}
0 & 1 \\
& 2 
\end{bmatrix}.
\end{align*}
\end{example}
At first glance, Tesler matrices have little to do with Lusztig data or Kostant pictures. There is a representation that links both, though.
\begin{definition}
    For a vector $\mathbf{h}=\left(h_1,\,h_2,\,...,\,h_n\right)$ with integer entries, we let $\mathcal I\left(\mathbf{h}\right)$ denote the set of the \new{integral flow graphs} on $n+1$ vertices with the net flow $\left(\bh,\,-\sum {h_k}\right)$ such that every flow on the edges is non-negative (\Cref{fig:exintflow}).
\begin{figure}[h]
\centering
\begin{tikzpicture}
    \node (v1) at (0,0) [circle, fill, radius = 0.1cm, label = below:$h_1$] {};
    \node (v2) at (2,0) [circle, fill, radius = 0.1cm, label = below:$h_2$] {};
    \node (v3) at (4,0) [circle, fill, radius = 0.1cm, label = below:$h_3$] {};
    \node (v4) at (6,0) [circle, fill, radius = 0.1cm, label = below:$-h_1-h_2-h_3$] {};
    \draw[->] (v1) -- (v2); 
    \draw[->] (v2) -- (v3);
    \draw[->] (v3) -- (v4);
    \draw[->] (v1) to [out=30,in=150] (v3);
    \draw[->] (v2) to [out=30,in=150] (v4); 
    \draw[->] (v1) to [out=45,in=135] (v4); 
\end{tikzpicture}
\caption{Integral flow graph for $n=3$}
\label{fig:exintflow}
\end{figure}
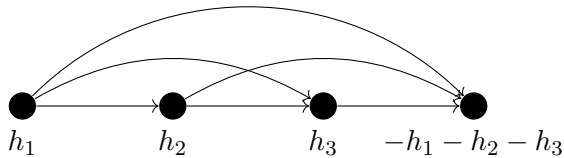
\end{definition}
It has been shown \cite{M_sz_ros_2016} that $\left|\T\left(\mathbf{h}\right)\right|=\left|\mathcal{I}\left(\mathbf{h}\right)\right|=\KPF\left(\mathbf{h},\,-\sum{h_i}\right)$.
This can be illustrated with a bijection between the three structures.

Firstly, given a Tesler matrix $A = \left(a_{ij}\right)$, we construct an integral flow with an edge from the $i$\textsuperscript{th} vertex to the $j$\textsuperscript{th} vertex assigned the value $a_{ij}$ for each $1\leq i<j\leq n$, while an edge from the $i$\textsuperscript{th} vertex to the last vertex is assigned the value $a_{ii}$ (\Cref{fig:bijectionTesIntex}). 
Secondly, we draw a Kostant picture with the number of loops around vertices $i$ through $j$ being the flow value from the $i$\textsuperscript{th} vertex to the $j$\textsuperscript{th} vertex.
\begin{figure}[h]
    \centering
    \includegraphics[height = 0.12\textheight]{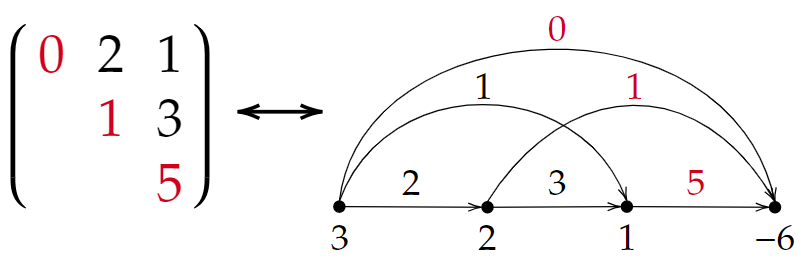}
    \caption{A Tesler matrix and an integral flow correspondence, $\mathbf{h}=\left(3,\,2,\,1\right)$}
    \label{fig:bijectionTesIntex}
\end{figure}

\subsection{Asymptotics}
\label{ss:Asymptotics}

The question of cardinality asymptotics was raised in \cite{ONeill} in the context of Tesler matrices. Thus, we shall consider sequences of type $\left|\T\left(\mathbf{h}\left(n\right)\right)\right|$ where the hook length depends on $n$. Since such sequences tend to grow exponentially, it is preferred to use $L\left(\mathbf{h}\right)=\ln \left|\T\left(\mathbf{h}\right)\right|$.

For example, it is easy to verify that $L\left(1,\,\mathbf{0}^n\right)=n\ln2$. 
More importantly for us, it was shown (by a clever appeal to the Morris identity) in \cite{zeilberger1998proof} that $$L\left(1,2,\dots,n\right)=\sum_{i=1}^n{\ln C_i}\sim n^2\ln2,$$ where $C_i$ denotes $i$\textsuperscript{th} Catalan number $\frac{1}{1+i}\binom{2i}{i}$ and the asymptotics notation is from \Cref{apx:Asymptotics}.

Regarding the regular Tesler matrices $\mathcal T\left(\mathbf{1}^n\right)$, \Cref{Oneill_Conjecture} was posited by Jason O'Neill. If true, it would imply directly the limit $L\left(\mathbf{1}^n\right)\asymp n^2$.
\begin{conjecture}[\cite{ONeill}]\label{Oneill_Conjecture}
    Let $A\in\T\left(\mathbf{1}^n\right)$ and let $\mu$ be the M\"{o}bius function for the corresponding Tesler poset (\Cref{def:PosetTesler}). Then $$\left|\mu\left(\hat{0},\,A\right)\right|\leq n!$$
\end{conjecture}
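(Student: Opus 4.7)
The plan is to use \Cref{thm:IntEqKos} and \Cref{refinementTheorem} to transport the problem into the Lusztig data poset, where the bound $n!$ acquires a natural interpretation via reduced words for the longest element $w_0 \in S_{n+1}$. Since $\mathbf h = \mathbf 1^n$ corresponds to the height vector $(1,2,\ldots,n)$, the isomorphism of \Cref{thm:IntEqKos} identifies the Tesler interval $[\hat 0, A]$ with an interval $[\hat 0, K]$ in the merge poset on $\K[1,2,\ldots,n]$, so it suffices to bound $|\mu_\K(\hat 0, K)|$ for every Kostant picture $K$ at this height.

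By \Cref{refinementTheorem}, the surjection $f \colon A(\varnu) \to \K(\varnu)$ sending a Lusztig datum to its Kostant picture is a poset refinement. My first sub-goal is to establish a fibre identity of the form
\[
\mu_\K(\hat 0, K) \;=\; \sum_{a \in f^{-1}(K)} \mu_A(\hat 0, a),
\]
valid whenever the fibres of $f$ are convex in the Lusztig order, i.e.\ whenever $f$ restricts on closed intervals to a Galois connection. Granting this, bounding $|\mu|$ reduces to counting falling maximal chains in Lusztig intervals under an EL-labeling induced by the standard reduced word for $w_0$: cover relations correspond to elementary moves labelled by positive roots, and a falling chain encodes a linear extension of the multiset of roots appearing in $a$.

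A careful accounting of the admissible linear extensions, constrained by the PBW-type commutation relations between positive roots, should yield the desired bound $n! = |S_n|$, substantially smaller than the naive $\binom{n+1}{2}!$. The main obstacle is the convexity of the fibres $f^{-1}(K)$: poset refinements do not in general respect Möbius data, and one must show that merge-order intervals pull back to intervals (or at least to Möbius-trivial subposets) in the Lusztig order. If convexity fails, the fallback is to EL-label the merge poset directly by recording the right endpoint of each newly-split arc, and to bound decreasing chains in $[\hat 0, K]$ via a permutation encoding on the $n$ vertices of the Kostant diagram.
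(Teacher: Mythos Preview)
The statement you are attempting is labelled a \emph{conjecture} in the paper and is not proved there; it is quoted from \cite{ONeill} only to motivate the asymptotics question, and the paper then bypasses it entirely, obtaining $L(\mathbf 1^n)\sim\tfrac{2}{3}\gamma_+ n^{3/2}$ (Corollary~\ref{ATesler}) by the height-diagram machinery of \S\ref{ss:Asymptotics} rather than via any M\"obius estimate. So there is no ``paper's own proof'' to compare against.

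As for the proposal itself, the first step already breaks down. The map $f\colon A(\varnu)\to\K(\varnu)$ is a \emph{bijection} (both sets have cardinality $\KPF(\varnu)$), so every fibre $f^{-1}(K)$ is a singleton. ``Refinement'' in Theorem~\ref{refinementTheorem} means the two partial orders live on the same underlying set with the Lusztig order having strictly more relations (cf.\ Figure~\ref{fig:counterexample}); it is not a quotient map. Your fibre identity therefore collapses to the assertion $\mu_{\K}(\hat 0,K)=\mu_{A}(\hat 0,f^{-1}(K))$, which is false in general: adding order relations to a poset changes M\"obius values, and there is no mechanism (Galois connection, closure operator, or otherwise) that transfers $\mu$ across a refinement of this kind. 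The convexity discussion is thus beside the point. The subsequent steps are equally unsupported: neither poset is shown to carry an EL-labeling in the paper, and the proposed bound via ``reduced words for $w_0$'' conflates several different quantities (the number of reduced words for $w_0\in S_{n+1}$ is $\binom{n+1}{2}!/\prod(2i-1)^{n+1-i}$, not $n!$, and in any case maximal chains in these intervals have length up to $\binom{n+1}{2}-1$, not $n$). The fallback idea of labelling merge covers by the right endpoint of a split arc does not obviously produce an EL-labeling either, since multiple covers out of a given element can share the same right endpoint.
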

To exploit the connection between Tesler matrices and the KPF, we introduce a visually intuitive representation for a weight $\varnu$ called a \new{height diagram}. The height diagram of a Kostant picture for $\varnu = \sum \eta\left(i\right)\alpha_i$ is simply the graph of $y=\eta\left(x\right)$ in the plane. It is a discrete function that is zero on its entire domain except for a finite number of arguments. That being said, we shall draw in continuously for the sake of clarity.

The utility of height diagrams lies in the following easy observation. If a height diagram for $\varnu = \sum\eta(i)\alpha$ covers a height diagram for $\varnu'=\sum\eta'(i)\alpha_i$ in the sense that there exists $a\in\N$ such that $\eta(i-a) \ge \eta(i)$ for all $i\in\R$, then $\KPF(\varnu) \ge \KPF(\varnu')$. Thus, we can approximate complex height diagrams with simpler ones, such as $\eta=\mathbf{n}^m$ the height diagram of $\mathbf{h}=\left(n,\, \mathbf{0}^{m-1}\right)$. Let us introduce the notation $\T\left[n,\,m\right]=\T\left(n,\, \mathbf{0}^{m-1}\right)$. The next proof demonstrates an application. 
\begin{proposition}\label{BasicHeight}
    $L\left(1,\,2,\,...,\,n\right)\asymp L\left[n^2,\,n\right]$ and $L\left(\mathbf{1}^n\right)\asymp L\left[n,\,n\right]$.
\end{proposition}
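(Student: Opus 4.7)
The plan is to establish both equivalences by sandwiching each expression between two instances of the height-diagram domination principle stated just above the proposition.

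For the first claim, the height diagram of the weight associated to $\mathbf{h} = (1, 2, \ldots, n)$ is the discrete triangle $\eta_i = \binom{i+1}{2}$ on $[1,n]$, whose peak $\binom{n+1}{2}$ is at most $n^2$. The rectangular diagram $\eta' \equiv n^2$ of $\T[n^2, n]$ dominates this triangle with no shift, yielding $L(1, \ldots, n) \le L[n^2, n]$ immediately. For the reverse direction, I enlarge the triangle to width $M = \lceil (1 + \sqrt 2)\,n \rceil$, chosen so that $\binom{M - n + 2}{2} \ge n^2$. Shifting this enlarged triangle leftward by $M - n$ places its values on $[1,n]$ into the interval $[\binom{M - n + 2}{2},\ \binom{M + 1}{2}]$, all at least $n^2$, dominating the rectangle. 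Hence $L[n^2, n] \le L(1, 2, \ldots, M)$, so the rectangle is sandwiched between $L(1, \ldots, n)$ and $L(1, \ldots, \lceil(1+\sqrt 2)n\rceil)$.

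For the second claim, the staircase height diagram $\eta_i = i$ of $\mathbf{1}^n$ is dominated on $[1, n]$ by the flat rectangle $\eta' \equiv n$, giving $L(\mathbf{1}^n) \le L[n, n]$. Conversely, the longer staircase $(1, 2, \ldots, 2n - 1)$, shifted leftward by $n - 1$, takes the values $n, n+1, \ldots, 2n - 1$ on positions $1, \ldots, n$, all $\ge n$, so it dominates the rectangle. This yields $L[n, n] \le L(\mathbf{1}^{2n - 1})$, sandwiching the rectangle between $L(\mathbf{1}^n)$ and $L(\mathbf{1}^{2n - 1})$.

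The principal subtlety is that the domination principle is stated with a shift $a \in \N$, whereas my reverse inequalities use leftward shifts. This is legitimate by translation invariance of the KPF: padding $\varnu$ with zero coordinates on either side cannot introduce new decompositions (since any root with a nonzero component at a zero coordinate of $\varnu$ is forbidden), so the height diagram may be translated in either direction without affecting the count. With both sandwich chains in hand, the $\asymp$ relation follows under the convention of \Cref{apx:Asymptotics}, which is designed to absorb the bounded constant-factor rescaling of the argument ($n \mapsto \lceil(1+\sqrt 2)n\rceil$ in the first claim, $n \mapsto 2n - 1$ in the second).
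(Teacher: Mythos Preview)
Your approach is essentially the paper's: sandwich the rectangular height diagram between the target family evaluated at $n$ and at a constant multiple of $n$, via the domination principle. The paper uses the intermediate rectangle $[\tfrac12(n^2+n),n]$ and width $2n$ (and then asserts $L[\tfrac12(n^2+n),n]\asymp L[n^2,n]$), whereas you go straight to $[n^2,n]$ at the cost of the slightly larger width $\lceil(1+\sqrt 2)\,n\rceil$; your remarks on shift direction and translation invariance are more explicit than the paper's. For the second claim your sandwich $L(\mathbf 1^n)\le L[n,n]\le L(\mathbf 1^{2n-1})$ is exactly the paper's $L(\mathbf 1^n)\le L[n,n]\le L(\mathbf 1^{2n})$ up to an irrelevant unit.

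One point in your last paragraph needs correcting. The relation $\asymp$ of \Cref{apx:Asymptotics} compares \emph{values}, not arguments: nothing in that definition guarantees $f(n)\asymp f(cn)$ for a fixed constant $c$. Your chain $L(1,\dots,n)\le L[n^2,n]\le L(1,\dots,M)$ with $M\asymp n$ only yields $L(1,\dots,n)\asymp L[n^2,n]$ once you know $L(1,\dots,cn)\preccurlyeq L(1,\dots,n)$. The paper has this in hand from the Zeilberger identity $L(1,\dots,n)\sim n^2\ln 2$ stated just before the proposition; you should invoke it too rather than attribute the absorption to the definition of $\asymp$. For the second claim the paper is no more careful than you on this point---both arguments reduce the statement to the sandwich and leave the doubling property of $n\mapsto L(\mathbf 1^n)$ implicit.
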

\begin{proof}
    For $L\left(1,\,2,\,...,\,n\right)$ we have $\eta\left(x\right)=\frac{1}{2}\left(x^2+x\right)$, while for $L\left[\frac{1}{2}\left(n^2+n\right),\,n\right]$ it is $\eta\left(x\right)=\frac{1}{2}\left(n^2+n\right)$. Therefore (\Cref{examplehight1}), $L\left(1,\,2,\,...,\,n\right)\leq L\left[\frac{1}{2}\left(n^2+n\right),\,n\right]\leq L\left(1,\,2,\,...,\,2n\right)$, implying the first equation since  $L\left[\frac{1}{2}\left(n^2+n\right),\,n\right]\asymp L\left[n^2,\,n\right]$. The same can be stated for the second one (\Cref{examplehight2}).
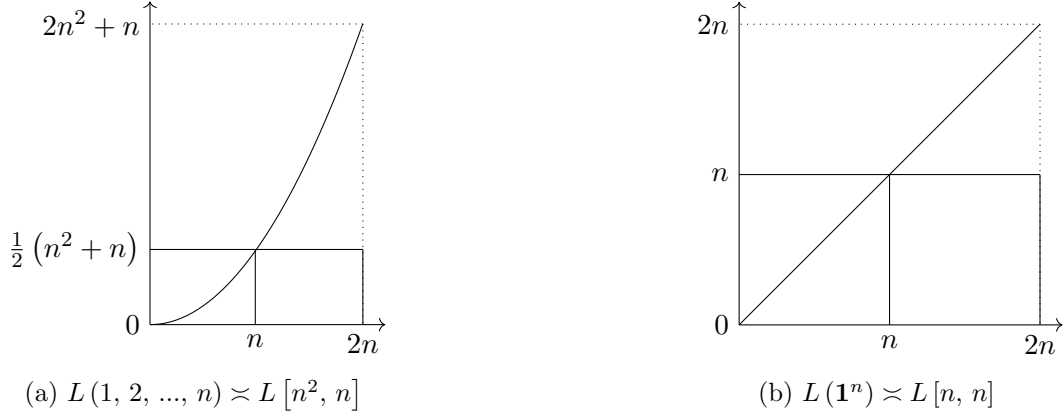
\begin{figure}[h]
\centering
\begin{subfigure}[b]{0.45\textwidth}
\centering
\begin{tikzpicture}
\draw[->] (0, 0) -- (3.13, 0);
\draw[->] (0, 0) -- (0,4.3);
\node[left] at (0, 0)  {$0$};
\draw[domain=0:2.83, smooth, variable=\x] plot ({\x}, {\x*\x/2});
\draw[-]  (2.83, 1)--(0, 1) node[left] {$\frac{1}{2}\left(n^2+n\right)$};
\draw[-] (1.4, 1)--(1.4, 0) node[below] {$n$};
\draw[-] (2.83, 1)--(2.83, 0) node[below] {$2n$};
\draw[dotted] (2.83, 4)--(2.83, 0);
\draw[dotted] (2.83, 4)--(0, 4) node[left] {$2n^2+n$};
\end{tikzpicture}
\caption{$L\left(1,\,2,\,...,\, n\right)\asymp L\left[n^2,\,n\right]$}
\label{examplehight1}
\end{subfigure}
\hfill
\begin{subfigure}[b]{0.45\textwidth}
\centering
\begin{tikzpicture}
\draw[->] (0, 0) -- (4.3, 0);
\draw[->] (0, 0) -- (0,4.3);
\draw[-] (4,4)--(0, 0) node[left] {$0$};
\draw[-]  (4, 2)--(0,2) node[left] {$n$};
\draw[-] (2, 2)--(2, 0) node[below] {$n$};
\draw[-] (4, 2)--(4, 0) node[below] {$2n$};
\draw[dotted] (4, 4)--(4, 0);
\draw[dotted] (4, 4)--(0, 4) node[left] {$2n$};
\end{tikzpicture}
\caption{$L\left(\mathbf{1}^n\right)\asymp L\left[n,\, n\right]$}
\label{examplehight2}
\end{subfigure}
\caption{Basic examples of height diagrams}
\end{figure}
\end{proof}
Now, we are ready to prove an important statement everything else will be based on.
\begin{proposition}\label{DecProp}
    Let $h,\,w,\,w_0:\Z^+\to\Z^+$ such that $w_0\succ w\succ 1$. Then 
    $$\max\left(\frac{w_0}{w}L\left[h,\,w\right],\,L^{\frac{1}{4}w^2}\left[h,\,\frac{w_0}{w}\right]\right)\precsim L\left[h,\, w_0\right]\precsim \frac{w_0}{w}L\left[h,\,w\right]+ L^{w^2}\left[h,\,\frac{w_0}{w}\right],$$
    where $T^k\left[\eta\right]$ denotes the maximal number of colorings a Kostant picture with height $\eta$ can have if every loop is colored by one of $k$ distinct colors.
\end{proposition}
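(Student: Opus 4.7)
The plan is to partition the $w_0$ vertices underlying any Kostant picture in $\K[h^{w_0}]$ into $m=w_0/w$ consecutive blocks of width $w$, and to classify every loop $\alpha_{ij}$ as \emph{internal} (both endpoints in a single block) or \emph{external} (crossing at least one block boundary). All four inequalities in the statement arise from different ways of tracking this block decomposition. The first lower bound $L[h,w_0]\gtrsim (w_0/w)\,L[h,w]$ is immediate: concatenate $m$ independent Kostant pictures from $\K[h^w]$, one per block with no external loops, giving an injection $\K[h^w]^m\hookrightarrow \K[h^{w_0}]$.

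The second lower bound $L[h,w_0]\gtrsim L^{w^2/4}[h,w_0/w]$ requires more care. Since $T^{w^2/4}[h,m]=(w^2/4)^{hm}$ is achieved by the picture on $m$ vertices whose loops are all single-vertex $\alpha_v$ of multiplicity $h$, I need to inject the $(w^2/4)^{hm}$ colorings of this extremal picture into $\K[h^{w_0}]$. For each block $v$ I would translate the color data $(c_{v,1},\dots,c_{v,h})\in([w/2]^2)^h$ into a multiset of $h$ tilings of the block, drawn from $h$ disjoint \emph{color classes} of block-tilings each of size $w^2/4$. Such disjoint classes exist inside the $2^{w-1}$ tilings of a block whenever $h\cdot w^2/4\le 2^{w-1}$, which holds in the regime $w_0\succ w\succ 1$. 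Disjointness lets one recover each $c_{v,k}$ from the constructed big picture, giving injectivity, while stacking the $h$ tilings in each block yields a Kostant picture of the required height $h^{w_0}$.

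For the upper bound I would go in the reverse direction. Given $K\in\K[h^{w_0}]$ I extract: an \emph{external skeleton} $K_{\mathrm{ext}}$, a Kostant picture on $m$ vertices formed by contracting each block of $K$ to a point and keeping only the external loops, with each such loop decorated by a color in $[w]\times[w]$ recording its actual endpoint offsets inside its first and last blocks (so at most $w^2$ colors per loop); and internal sub-pictures $K_1,\dots,K_m$ with $K_v\in\K[\eta_v^w]$ where $\eta_v\le h$ is the residual height, equal to $h$ minus the number of external loops passing through block $v$. The map $K\mapsto(K_{\mathrm{ext}},K_1,\dots,K_m)$ is injective, and monotonicity of Kostant counts in the height yields
\[
|\K[h^{w_0}]|\;\le\;(\text{subexponential factors})\cdot T^{w^2}[h,m]\cdot|\K[h^w]|^m,
\]
after summing over the possible external heights and skeleton shapes. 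The subexponential factors are absorbed into $\precsim$, and taking logarithms produces the upper bound.

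I expect the main obstacle to be the coloring lower bound: building honest Kostant pictures of height exactly $h^{w_0}$ out of arbitrary colorings while maintaining injectivity. This is where the $1/4=(1/2)^2$ factor is paid — it is the price for restricting to $w^2/4$ colors, so that enough disjoint color classes of block-tilings fit inside the total supply $2^{w-1}$. Verifying the counting inequality $h\cdot w^2/4\le 2^{w-1}$ in the asymptotic regime and writing down the reconstruction procedure that recovers a coloring from a big picture constitute the bulk of the work, whereas the upper bound is essentially a clean encoding.
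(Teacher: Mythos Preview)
Your block decomposition, the first lower bound, and the upper-bound encoding all match the paper's argument. The gap is in your second lower bound.

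The assertion that $h\cdot w^2/4\le 2^{w-1}$ ``holds in the regime $w_0\succ w\succ 1$'' is false: the hypotheses place no constraint on $h$ relative to $w$, so nothing prevents $h=2^{2w}$, say. More structurally, your injection lands entirely among block-internal big pictures, so it is really an injection into $\K[h^w]^m$; but when $w\prec\sqrt h$ the paper's own asymptotics give $L[h,w]\asymp w^2\ln(\sqrt h/w)$, whence $|\K[h^w]|^m$ is far smaller than $(w^2/4)^{hm}$ once $h$ is large. The injection you describe therefore cannot exist in general. (A side remark: colorings here are multiset colorings, as the formula $\sum\ln\binom{c+k-1}{k-1}$ in the next lemma makes explicit, so $T^{w^2/4}[h,m]$ is not literally $(w^2/4)^{hm}$; the all-singleton picture contributes $\binom{h+w^2/4-1}{w^2/4-1}^{m}$. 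This overcount happens to point in the harmless direction, but it is a symptom of the same problem --- you are trying to pack too much into the internal part.)

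The paper's argument for this bound is the mirror image of your upper bound rather than a separate tiling construction. Instead of encoding the color data in \emph{internal} loops, it realizes each colored small loop directly as an \emph{external} big loop, with the color $(a,b)\in[w/2]^2$ specifying the exact endpoint offsets inside the first and last blocks it touches. Restricting each offset to half a block is precisely what buys the factor $w^2/4$ instead of $w^2$: it guarantees that small loops spanning disjoint block-intervals lift to non-overlapping big loops, so the lifted configuration still has height at most $h$ everywhere (then pad with singletons to reach $h$ exactly). This works uniformly in $h$ and requires no inequality between $h$ and $2^w$.
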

\begin{proof}
   We can decompose $\eta=\left[h,\,w_0\right]$ into $\frac{w_0}{w}$ independent $\left[h,\,w\right]$ (\Cref{HDecompose}). If we ignore all possible loops $\alpha_{ij}$ that are contained within several such divisions, we have $L\left[h,\,w_0\right]\succsim \frac{w_0}{w} L\left[h,\, w\right]$.

   Now, consider all such missing $\alpha_{ij}$ as determined by two parameters: the set of divisions the loop covers and the relative positions of its ends in the first and the last divisions. Note that in order to produce a lower bound, this situation can be modeled by $\K\left[h,\,\frac{w_0}{w}\right]$ with each loop having one of $\left(\frac{w}{2}\right)^2$ possible colors ($\frac{w}{2}$ possible choices of each loop end so that the loops with disjoint division sets do not cross). Moreover, since $L\left[h,\,\frac{w_0}{w}\right]\prec w L\left[h,\,\frac{w_0}{w}\right]\precsim  L\left[h,\,w_0\right]$, we are only concerned with the maximal number of colorings a picture can have.
   If we allows all $w$ possible choices per end, we achieve the upper-bound.
\begin{figure}[h]
\centering
\begin{tikzpicture}
\draw[->] (0, 0) -- (12.3, 0);
\draw[->] (0, 0) -- (0,4.3);
\node[left] (0, 0)  {$0$};
\draw[-] (12,4)--(0, 4) node[left] {$h$};
\draw[-] (12,4)--(12, 0) node[below] {$w_0$};
\draw[-] (2,4)--(2, 0) node[below] {$w$};
\draw[-] (4,4)--(4, 0) node[below] {$2w$};
\draw[-] (6,4)--(6, 0) node[below] {$3w$};
\node at (9, 2) {$\cdots$};
\draw[thick,  double] (1.2,0.3)--(11.5, 0.3);
\draw[thick, dashed] (3.4,.5)--(6.5, .5);
\draw[thick, densely dotted] (0.3,.5)--(2.7, .5);
\draw[ultra thin] (2.6,.7)--(6.2, .7);
\end{tikzpicture}
\caption{Decomposition of $T\left[h,\,w_0\right]$}
\label{HDecompose}
\end{figure}
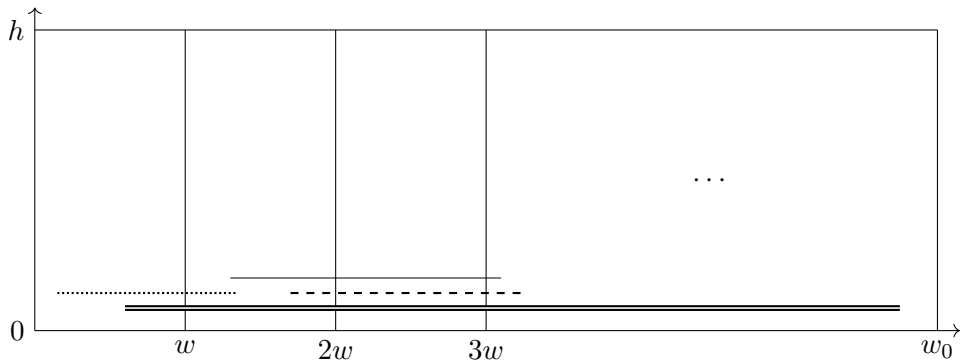
\end{proof}
\begin{lemma}\label{L^k}
    Let $h,\,w,\,k:\Z^+\to\Z^+$ such that $k\succ 1$. Then
    $$L^k\left[h,\, w\right]\asymp
\begin{dcases}
    hw\ln\left(\frac{k}{h}\right)&\text{if } h\prec k\\
    w\sqrt{hk}&\text{if } k\preccurlyeq h\preccurlyeq kw^2\\
    kw^2\ln\left(\frac{h}{kw^2}\right)&\text{if } h\succ kw^2
\end{dcases}$$
\end{lemma}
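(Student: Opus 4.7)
The plan is to recast $T^k[h,w]$ as a discrete concave maximization. A Kostant picture $K\in\K[h,w]$ is determined by its loop multiplicities $(m_{ij})_{1\le i\le j\le w}$, which satisfy the coverage constraint $\sum_{(i,j):\,i\le p\le j}m_{ij}=h$ for each $p\in\{1,\dots,w\}$, and the number of $k$-colorings of $K$ is $\prod_{i\le j}\binom{k+m_{ij}-1}{m_{ij}}$ (a multiset of colored loops of each type). Writing $g(m):=\ln\binom{k+m-1}{m}$, which is concave with $g(m)\asymp m\ln(k/m)$ for $m\prec k$, $g(m)\asymp k$ for $m\asymp k$, and $g(m)\asymp k\ln(m/k)$ for $m\succ k$, we have $L^k[h,w]=\max\sum_{i\le j}g(m_{ij})$. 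The three regimes in the lemma correspond to three qualitative shapes of the optimizer.

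Attacking the real relaxation by Lagrange multipliers with the symmetric ansatz $\lambda_p\equiv\lambda$, stationarity reads $g'(m_{ij})=\lambda(j-i+1)$, so $m_{ij}$ depends only on the loop length $\ell:=j-i+1$; call the common value $m(\ell)$. Since $g'(m)\asymp\ln(1+k/m)$, the regimes are distinguished by where the crossover $m\asymp k$ lands relative to $[1,w]$. When $\lambda\succ 1$, even unit loops satisfy $m(1)\prec k$; the central-position constraint $\sum_\ell\ell\,m(\ell)=h$ is saturated by $\ell=1$, forcing $m(1)\asymp h$ and $\lambda\asymp\ln(k/h)$, which yields $L^k[h,w]\asymp w\,g(h)\asymp hw\ln(k/h)$, the case $h\prec k$. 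When $\lambda\prec 1/w$, every $m(\ell)\succ k$, so $m(\ell)\asymp k/(\lambda\ell)$ and the constraint gives $wk/\lambda\asymp h$, hence $\lambda\asymp wk/h$; summing the $\Theta(w^2)$ contributions $g(m(\ell))\asymp k\ln(1/(\lambda\ell))$ yields $L^k[h,w]\asymp kw^2\ln(h/(kw^2))$, the case $h\succ kw^2$. In between, the plateau $m\asymp k$ occurs at $\ell^{*}:=1/\lambda\asymp\sqrt{h/k}$: the $\Theta(w\ell^{*})$ loop types with $\ell\le\ell^{*}$ each contribute $\Theta(k)$, and the longer loops contribute a geometric-decay tail of the same order, so $L^k[h,w]\asymp wk\ell^{*}\asymp w\sqrt{hk}$, the intermediate case.

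For the lower bounds I would exhibit the integer Kostant pictures suggested by this analysis: set $m_{ii}=h$ and $m_{ij}=0$ otherwise in the first regime; distribute $h$ roughly uniformly across all $\binom{w+1}{2}$ loop types in the third regime; and in the intermediate regime set $m_{ij}=k$ for every $(i,j)$ with $j-i+1\le\lfloor\sqrt{2h/k}\rfloor$ and pad by unit loops to satisfy the coverage exactly. Each construction is a valid Kostant picture and, by direct substitution into the asymptotics of $g$, realizes the claimed bound.

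The main obstacle will be the matching upper bound: proving that no admissible multiplicity vector beats the symmetric optimum. My strategy is to apply Jensen's inequality to $g$ together with the linear identity $\sum_{i\le j}m_{ij}(j-i+1)=hw$ obtained by summing the coverage constraints over $p$, and check that integer rounding costs only a factor absorbed by $\asymp$. The most delicate step is the intermediate regime, where neither limiting form of $g$ is valid on the whole support, so the short-loop ($\ell\le\ell^{*}$) and long-loop ($\ell>\ell^{*}$) contributions must be controlled separately. The asymptotic agreement of the three expressions at the interfaces $h\asymp k$ and $h\asymp kw^2$, where they reduce to $wk$ and $kw^2$ respectively, serves as a sanity check.
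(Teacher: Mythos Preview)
Your approach is essentially the paper's: both recast $L^k[h,w]$ as the maximum of $\sum g(m_{ij})$ with $g(m)=\ln\binom{m+k-1}{k-1}$ under the coverage constraints, reduce by concavity/Lagrange to multiplicities depending only on the loop length $\ell=j-i+1$, and then split into regimes according to the size of the Lagrange multiplier. The paper parametrizes the stationary point exactly as $c_\ell=k/(a^\ell-1)$ with $a=1+k/c_1$ and evaluates the constraint $\sum_\ell \ell\,c_\ell=h$ by integral comparison, whereas you phrase the same thing via the crossover scale $\ell^*\asymp 1/\lambda$ and the asymptotic forms of $g$; these are equivalent descriptions of the same optimizer. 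The one place the paper is a bit more explicit than your sketch is the upper bound: rather than invoking Jensen against the summed identity $\sum m_{ij}\ell_{ij}=hw$, it first restricts to a ``trapezoid'' of entries (discarding boundary rows/columns and very long loops at constant-factor cost) so that all diagonals have length $\asymp w$, then argues that shifting nonzero entries to vacant cells on lower diagonals and averaging within each diagonal can only help the objective while keeping $s_p\le h$. That symmetrization is exactly what justifies your ``symmetric ansatz $\lambda_p\equiv\lambda$'' as giving the true maximum, so you may want to spell it out along those lines rather than leave it as ``Jensen plus the linear identity''.
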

\begin{proof}
    We shall consider every Kostant picture $\sum{c_{ij}\alpha_{ij}}$ with $\eta=\left[h,\, w\right]$ as matrix $C=\left(c_{ij}\right)\in U_n$. Note that every such matrix is determined by $$s_n=\sum_{i\leq n\leq j}{c_{ij}}=\eta_n.$$ Crucially for us, we can now approximate $L^k\left[h,\,w\right]$: $$L^k\left[h,\, w\right]=\max{\sum_{c=c_{ij}}{\ln\binom{c+k-1}{k-1}}}\sim\max{\sum_{c=c_{ij}\neq 0}{\left(c\ln\left(1+\frac{k}{c}\right)+k\ln \left(1+\frac{c}{k}\right)\right)}}.$$

    Note that we are asymptotically (in terms of $\asymp$) allowed not to consider the entries $c_{ij}$ with either $i\geq w-\Delta w +1$, $j\leq \Delta w$ , or $j-i\geq \Delta w= \frac{1}{k}\times w$ for a constant $k=3$, for example. This leaves us with a trapezoid matrix, which we can divide into diagonals indexed $1$ through $\Delta w$ (\Cref{TMatrix}). Consider $\sum{s_n}\asymp hw$. Every entry on $n$\textsuperscript{th} diagonal goes into this sum with the coefficient $n$, meaning that we can fill the maximum of $D\asymp \min\left(w,\,\sqrt{h}\right)$ first diagonals by non-zero entries.
\begin{figure}[h]
    \centering
    \begin{tikzpicture}
    \node at (0,0) {$\begin{bmatrix}
*&*&*&*&*&*&*&*&*\\
&*&*&3&*&*&*&*&*\\
&&*&2&3&*&*&*&*\\
&&&1&2&3&*&*&*\\
&&&&1&2&3&*&*\\
&&&&&1&2&3&*\\
&&&&&&*&*&*\\
&&&&&&&*&*\\
&&&&&&&&*\\
\end{bmatrix}$};

\draw (-0.69,0.282)
  -- (1.06,0.282)
  -- (-.69,1.841)
  -- cycle;
  \draw (-.15,-.2)
  -- (1.6,-.2)
  -- (-.15,1.36)
  -- cycle;
  \draw (.39,-0.682)
  -- (2.14,-0.682)
  -- (.39,0.879)
  -- cycle;

\end{tikzpicture}
    \caption{Three diagonals of a trapezoid matrix}\label{TMatrix}
\end{figure}

Now, consider the matrix which maximizes the sum we are interested in. Bring every non-zero entry to a vacant place on a lower diagonal if there are any left, and replace every entry with the arithmetic average on its diagonal. Not only does this have no impact on the asymptotics, it also preserves $s_n\leq h$. Let $c_n$ be the value all entries we now have on $n$\textsuperscript{th} diagonal, then $$L^k\left[h,\, w\right]\asymp w\sum_{n=1}^{D}{c_n\ln\left(1+\frac{k}{c_n}\right)}+kw\sum_{n=1}^{D}{\ln \left(1+\frac{c_n}{k}\right)}.$$

It is easy to check that the right-hand side is maximized iff $\ln\left(1+\frac{k}{c_n}\right)=n\ln\left(1+\frac{k}{c_1}\right)$. Let us denote $a=1+\frac{k}{c_1}$, then $c_n=\frac{k}{a^n-1}$, allowing us to rewrite the sum as 
$$L^k\left[h,\, w\right]\asymp hw\ln a+ kw\sum_{n=1}^{\infty}{\frac{1}{n}\times \frac{1-a^{-nD}}{a^n-1}},\, \sum_{n=1}^{D}{\frac{n}{a^n-1}}=\frac{h}{k}.$$

What is left is to work through all possible variants.
\begin{enumerate}[label=(\roman*)]
     \item If $h\prec k$, then $$\frac{h}{k}=\sum_{n=1}^{D}{\frac{n}{a^n-1}}\sim \frac{1}{a}.$$
     Substituting this, we get $L^k\left[h,\, w\right]\asymp hw\ln\left(\frac{k}{h}\right)$.
     \item If $h\asymp k$, then by similar analysis $a-1\asymp 1$, leading to $L^k\left[h,\, w\right]\asymp hw$.
     \item If $k\prec h\preccurlyeq kw^2$, then we shall impose the following bounds: $$\int_1^{D+1}{\frac{x\, dx}{a^x-1}}\leq\sum_{n=1}^{D}{\frac{n}{a^n-1}}\leq \int_0^D{\frac{x\, dx}{a^x-1}}.$$ This leads to $\ln a\asymp \sqrt{\frac{k}{h}}$ and $L^k\left[h,\, w\right]\asymp w\sqrt{hk}$. 
     \item If $h\succ kw^2$, then $D\asymp w$ and $$\frac{h}{k}=\sum_{n=1}^{D}{\frac{n}{a^n-1}}\sim \frac{D}{\ln a}.$$ This leads to $L^k\left[h,\, w\right]\asymp kw^2\ln\left(\frac{h}{kw^2}\right)$.
\end{enumerate}
\end{proof}
\begin{theorem}\label{ATheo}
Let $h,\,w:\Z^+\to\Z^+$ such that $h,\, w\succ 1$. If $w\prec \sqrt{h}$, then $L\left[h,\, w\right]\sim w^2\ln\left(\frac{\sqrt{h}}{w}\right)$. If $w\succ \sqrt{h}$, then $L\left[h,\, w\right]\sim \gamma_+ w\sqrt{h}$, where $\gamma_+$ is universal constant.
\end{theorem}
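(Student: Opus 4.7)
The plan is to sandwich $L[h, w]$ by Proposition \ref{DecProp} and evaluate the resulting colored-Kostant-picture terms with Lemma \ref{L^k}. I will apply Proposition \ref{DecProp} with outer width the theorem's $w$ and an auxiliary width $w_{\star}$ satisfying $1 \prec w_{\star} \prec w$, yielding
$$L^{w_{\star}^2/4}\!\bigl[h,\, w/w_{\star}\bigr] \ \precsim\ L[h, w]\ \precsim\ \tfrac{w}{w_{\star}}\,L[h, w_{\star}] + L^{w_{\star}^2}\!\bigl[h,\, w/w_{\star}\bigr].$$
The key observation is that Lemma \ref{L^k} applied with color count $k \asymp w_{\star}^2$ and inner width $w/w_{\star}$ involves the combination $k(w/w_{\star})^2 \asymp w^2$, which is independent of $w_{\star}$. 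Consequently the trichotomy in Lemma \ref{L^k} collapses to the dichotomy $h \succ w^2$ versus $h \prec w^2$---exactly the theorem's split $w \prec \sqrt h$ versus $w \succ \sqrt h$.

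In Case 1 ($w \prec \sqrt h$), regime (iii) of Lemma \ref{L^k} gives $L^{k}\bigl[h, w/w_{\star}\bigr] \asymp w^2 \ln(h/w^2) \asymp w^2 \ln(\sqrt h/w)$; choosing $w_{\star}$ to grow very slowly (say $w_{\star} = \ln\ln w$) and using the crude bound $L[h, w_{\star}] \le w_{\star}^2 \ln(h+1)$ makes the outer term $\tfrac{w}{w_{\star}} L[h, w_{\star}]$ of strictly lower order than the main one. In Case 2 ($w \succ \sqrt h$), regime (ii) instead gives $L^{k}\bigl[h, w/w_{\star}\bigr] \asymp (w/w_{\star})\sqrt{hk} \asymp w\sqrt h$; the universal constant $\gamma_+$ is the exact coefficient of $w\sqrt h$ in this middle regime, arising from the geometric-series optimization $c_n = k/(a^n - 1)$ carried out inside the proof of Lemma \ref{L^k}.

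The main obstacle is upgrading the $\asymp$ supplied by the two lemmas to the $\sim$ claimed in the theorem. Proposition \ref{DecProp} is inherently asymmetric---its upper bound permits $w_{\star}$ endpoint choices per cross-block loop while its lower bound permits only $w_{\star}/2$, producing a factor-of-four gap between the $L^{w_{\star}^2}$ and $L^{w_{\star}^2/4}$ estimates. In Case 1 this multiplicative mismatch is harmlessly absorbed into the diverging $\ln(\sqrt h/w)$, pinning the leading coefficient to $1$. In Case 2 there is no such logarithmic slack, and closing the gap to identify $\gamma_+$ exactly will require a finer count of cross-block loops in Proposition \ref{DecProp}---or, alternatively, a direct extremal integral-flow calculation matching the geometric optimum in Lemma \ref{L^k} on the nose. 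This is where the technical heart of the argument will lie.
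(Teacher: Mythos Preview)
Your Case 1 contains a genuine error. In regime (iii) of Lemma~\ref{L^k} one has $L^{k}[h,w']\asymp k(w')^{2}\ln\bigl(h/(k(w')^{2})\bigr)$, so with $w'=w/w_{\star}$ and $k=w_{\star}^{2}/4$ the lower bound is $\asymp (w^{2}/4)\ln(4h/w^{2})$, while with $k=w_{\star}^{2}$ the upper bound is $\asymp w^{2}\ln(h/w^{2})$. The diverging logarithm only swallows the additive $\ln 4$ \emph{inside} the argument; the prefactor $1/4$ remains. Thus your sandwich yields $L[h,w]\asymp w^{2}\ln(\sqrt{h}/w)$, not $\sim$ with constant $1$. (On top of this, Lemma~\ref{L^k} itself is stated only up to $\asymp$, introducing further unspecified constants on both sides.) The paper does \emph{not} close this gap via the logarithm. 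Instead, after establishing $\asymp$ it first shows $L[h,w]\asymp L_{2}[h,w]\precsim L^{2}[h,w]$ by a doubling argument (if $L_{2}\prec L$ then $L[2h,2w]\succsim 4L[h,w]$, contradiction), computes $L^{2}[h,w]$ directly, and then upgrades to $\sim$ by splitting each Kostant picture into its long loops ($j-i\geq w/c$) and short loops ($j-i\leq w/c$): the long-loop contribution is at most $w^{2}\ln(\sqrt{h}/w)$ with constant $1$, while the short-loop contribution is $\precsim (3\gamma_{-}/c)\,w^{2}\ln(\sqrt{h}/w)$ and vanishes as $c\to\infty$.

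For Case 2 you correctly flag that Proposition~\ref{DecProp} and Lemma~\ref{L^k} alone give only $\asymp$. However, $\gamma_{+}$ is not ``the exact coefficient in regime (ii)'' of Lemma~\ref{L^k}---that lemma never produces a sharp constant. The paper instead leans on the external input $L(1,2,\dots,n)\sim n^{2}\ln 2$ (Zeilberger, via Proposition~\ref{BasicHeight}) to know $L[n^{2},n]\asymp n^{2}$, and then uses Proposition~\ref{DecProp} together with the first two cases of Lemma~\ref{L^k} to extract a limiting constant $\gamma_{+}$ for $L[n^{2},w]/(wn)$ as $w\succ n$; your proposal does not invoke this ingredient.
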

\begin{proof}
    Using the first two cases of \Cref{L^k}, we can deduce $L\left[n^2,\, w\right]\sim \gamma_+ wn,\,w\succ n$ for some $\gamma_+$ (\Cref{DecProp}) from $L\left[n^2,\,n\right]\asymp n^2$. From this follows $L\left[h,\, w\right]\sim \gamma_+ w\sqrt{h},\, w\succ \sqrt{h}\succ 1$.

    Consider $w\succ\sqrt{h}\succ 1$. The third case of \Cref{L^k} leads to $w^2\ln\left(\frac{\sqrt{h}}{w}\right)\preccurlyeq L\left[h,\,w\right]\preccurlyeq w\sqrt{h}$. Let $T_2\left[h,\,w\right]$ denote the maximal number of ways to split a Kostant picture with height $\left[h,\,w\right]$ into two Kostant pictures with heights $\left[\frac{h}{2},\,w\right]$. Assume $L_2\left[h,\,w\right]\prec L\left[h,\,w\right]$, then $L\left[2h,\,2w\right]\succsim 4L\left[h,\,w\right]$, which is impossible. Thus, $L\left[h,\,w\right]\asymp L_2\left[h,\,w\right]\precsim L^2\left[h,\,w\right] $. Note that we can approximate $L^2\left[h,\,w\right]$ much like we did in \Cref{L^k}. In fact, the same manipulations with the trapezoid matrix lead to 
    $$
        L^2\left[h,\, w\right]\asymp w^2\ln\left(\frac{c_1}{w!}\right),\, c_1\asymp\frac{h}{w}.
    $$
    So we have $w^2\ln\left(\frac{\sqrt{h}}{w}\right)\preccurlyeq L\left[h,\,w\right]\preccurlyeq  L^2\left[h,\,w\right]\asymp w^2\ln\left(\frac{\sqrt{h}}{w}\right)$. 

    Now, let $L\left[h,\,w\right]\precsim \gamma_- w^2\ln\left(\frac{\sqrt{h}}{w}\right)$ for some height $\eta=\left[h,\,w\right]$. Denote the number of Kostant pictures with this height such that for every $\alpha_{ij}$ either $i=j$ or $j-i\geq k$ holds as $T\left[h,\,w\right]_{\geq k}$. Similarly defining $T\left[h,\,w\right]_{\leq k}$, we have $$L\left[h,\,w\right]\leq  L\left[h,\,w\right]_{\geq w/c}+L\left[h,\,w\right]_{\leq w/c}\precsim w^2\ln\left(\frac{\sqrt{h}}{w}\right)+\frac{3\gamma_-}{c}w^2\ln\left(\frac{\sqrt{h}}{w}\right),$$ for every positive integer $c$. Since $L\left[h,\,w\right]\succsim w^2\ln\left(\frac{\sqrt{h}}{w}\right)$, we have $L\left[h,\,w\right]\sim w^2\ln\left(\frac{\sqrt{h}}{w}\right)$.
\end{proof}
\begin{corollary}
\label{CorSum}
    Let $\eta:\Z^+\to\Z^+$ be a non-decreasing function such that $n\succ\sqrt{\eta\left(n\right)}\succ 1$ and $\sum{\sqrt{\eta\left(i\right)}}\succ \eta\left(n\right)$. Then $$L\left[\eta\right]_n=L\left[\eta\left(1\right),\,\eta\left(2\right),\,...,\,\eta\left(n\right)\right]\sim \gamma_+ \sum_{i=1}^n{\sqrt{\eta\left(i\right)}}.$$
\end{corollary}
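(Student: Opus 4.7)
The plan is to reduce to \Cref{ATheo} via a dyadic partition of $[1,n]$ into sub-intervals on which $\eta$ is approximately constant, apply the rectangle asymptotics on each piece, and sum. Fix a small $\epsilon>0$ and, using the thresholds $(1+\epsilon)^j$, partition $[1,n]$ into maximal sub-intervals $I_1,\dots,I_m$ on each of which $\eta$ varies by at most a factor of $1+\epsilon$; this is possible because $\eta$ is non-decreasing. Writing $h_j^-=\min_{I_j}\eta$, $h_j^+=\max_{I_j}\eta$, and $w_j=|I_j|$, one has
$$
\sum_j w_j\sqrt{h_j^-} \,\le\, \sum_{i=1}^n\sqrt{\eta(i)} \,\le\, \sqrt{1+\epsilon}\,\sum_j w_j\sqrt{h_j^-},
$$
so it will suffice to prove $L[\eta]\sim \gamma_+\sum_j w_j\sqrt{h_j^-}$ up to the factor $\sqrt{1+\epsilon}$ and then let $\epsilon=\epsilon(n)\to 0$ slowly with $n$.

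For the lower bound, I would restrict attention to Kostant pictures whose every loop is contained in a single $I_j$, obtaining $|\K[\eta]|\ge \prod_j |\K[\eta|_{I_j}]|$; the monotonicity of $\KPF$ under the covering relation then gives $|\K[\eta|_{I_j}]|\ge |\K[h_j^-,w_j]|$ since the constant-$h_j^-$ profile is pointwise dominated by $\eta|_{I_j}$. On ``good'' blocks where $w_j\succ\sqrt{h_j^-}$, \Cref{ATheo} supplies $L[h_j^-,w_j]\sim \gamma_+ w_j\sqrt{h_j^-}$; on the ``bad'' blocks one has $w_j\sqrt{h_j^-}\le h_j^-\le \eta(n)$, so their total contribution to $\sum_j w_j\sqrt{h_j^-}$ is bounded by $m\cdot\eta(n)\precsim \epsilon^{-1}\eta(n)\log\eta(n)$. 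Since $\sum\sqrt{\eta(i)}\succ \eta(n)$, this is $o(\sum_i\sqrt{\eta(i)})$ for a suitable $\epsilon(n)\to 0$, yielding $L[\eta]\succsim \gamma_+\sum_i\sqrt{\eta(i)}$.

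For the upper bound, the covering inequality gives $L[\eta]\le L[\eta^+]$ for the step function $\eta^+$ equal to $h_j^+$ on $I_j$, and I would stratify Kostant pictures of $\eta^+$ by their multiset of block-spanning loops (those $\alpha_{ij}$ with $i$ and $j$ in different blocks). Once these loops are fixed, the remainder factors as a product of block-local Kostant pictures of height at most $h_j^+$ on $I_j$, each controlled by \Cref{ATheo} to give $\sim\gamma_+ w_j\sqrt{h_j^+}$. The number of spanning-loop configurations is polynomial in $\eta(n)$ with exponent $O(m^2)$, contributing a subleading correction of the same flavor as the $L^{w^2}$ term in \Cref{DecProp}, which is absorbed into the $o(1)$ error by the hypotheses. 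The main obstacle is precisely this last step: ensuring that inter-block loops together with the bad blocks contribute only an $o(1)$ fraction of the leading term $\gamma_+\sum_i\sqrt{\eta(i)}$. The hypothesis $n\succ\sqrt{\eta(n)}$ keeps the widths $w_j$ asymptotically larger than $\sqrt{h_j^\pm}$ on almost all blocks so that \Cref{ATheo} applies, while $\sum\sqrt{\eta(i)}\succ \eta(n)$ prevents any single block or boundary term from dominating; the delicate point is to calibrate $\epsilon=\epsilon(n)\to 0$ against both rates simultaneously.
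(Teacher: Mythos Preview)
Your overall plan --- partition, apply \Cref{ATheo} on each piece, and sum --- is the paper's plan too, but you have chosen the wrong partition. The paper does not slice by level sets of $\eta$; it cuts $[1,n]$ into blocks of a \emph{single fixed width} $w=w(n)$ with $\sqrt{\eta(n)}\prec w\prec n$. Since $\eta$ is non-decreasing, every block then has height at most $\eta(n)$, hence $w\succ\sqrt{h_j}$ \emph{uniformly}, and \Cref{ATheo} applies to every block at once: there are no ``bad blocks'' at all, and there are only $n/w$ blocks rather than $\epsilon^{-1}\log\eta(n)$ of them. Both bounds become the Riemann sums $\gamma_+\, w\sum_i\sqrt{\eta(iw)}$, and the only calibration needed is that the Riemann-sum error $O\bigl(w\sqrt{\eta(n)}\bigr)$ be $o\bigl(\sum_i\sqrt{\eta(i)}\bigr)$; the hypothesis $\sum\sqrt{\eta(i)}\succ\eta(n)$ lets one take $w$ only slightly larger than $\sqrt{\eta(n)}$ so that this holds.

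By contrast, the calibration you flag as ``delicate'' is in fact impossible as written for some admissible $\eta$. Your bad-block bound $\sum_{\mathrm{bad}}w_j\sqrt{h_j^-}\le m\,\eta(n)\asymp\epsilon^{-1}\eta(n)\log\eta(n)$ is the crude one, and near the edge of the hypotheses it swamps the main term. Take $\eta(k)$ of order $k^2/(\log k)^2$: then $n\succ\sqrt{\eta(n)}\succ 1$ and $\sum_i\sqrt{\eta(i)}\sim \tfrac12\,n^2/\log n\succ\eta(n)$ both hold, yet $\sum_i\sqrt{\eta(i)}/\eta(n)\asymp\log n$ while $m\asymp\epsilon^{-1}\log n$, so requiring $m\,\eta(n)=o\bigl(\sum_i\sqrt{\eta(i)}\bigr)$ forces $\epsilon^{-1}\to 0$, contradicting $\epsilon\to 0$. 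One could try to rescue this with a sharper estimate on where the bad blocks actually live, but that is an extra layer of bookkeeping the fixed-width partition never needs. Your upper-bound sketch via stratifying on block-spanning loops is in the spirit of \Cref{DecProp}, and the paper's (very terse) upper bound should be read the same way --- again with the single width $w$, so that the colored correction term from \Cref{L^k} is controlled uniformly.
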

\begin{proof}
    For $n\succ w\succ \sqrt{\eta\left(n\right)}$ we have 
    $$ 
        \gamma_+ w\sum_{i=1}^{\frac{n-w}{w}}{\sqrt{\eta\left(iw\right)}}\precsim L\left[\eta\right]_n\precsim \gamma_+ w\sum_{i=1}^{\frac{n}{w}}{\sqrt{\eta\left(iw\right)}}.
    $$
    We shall choose such $w$ that $\sum{\sqrt{\eta\left(i\right)}}\succ  w\sqrt{\eta\left(n\right)}$, then we get the proposed asymptotics. 
\end{proof}
\begin{corollary}
\label{ATesler}
    $L\left(\mathbf{1}^n\right)\sim \frac{2}{3}\gamma_+ n\sqrt{n}.$
\end{corollary}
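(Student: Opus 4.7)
The plan is to obtain this as a direct application of \Cref{CorSum} to a suitable height function $\eta$, so the bulk of the work is identifying the right $\eta$ and verifying the hypotheses.

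First I would translate $L(\mathbf{1}^n)$ into the height-diagram language. Recall that $|\T(\mathbf{h})| = \KPF(\mathbf{h},\,-\sum h_i)$, so for $\mathbf{h}=\mathbf{1}^n$ we are counting Kostant pictures for the weight $\varnu = (1,1,\ldots,1,-n)\in\R^{n+1}$. Using the formula $\eta_k = \sum_{j\le k}\varnu_j$ noted after \Cref{iLTable}, this weight has height vector $\eta = (1,2,3,\ldots,n)$. Therefore
\[
    L(\mathbf{1}^n) = L[1,2,\ldots,n] = L[\eta]_n \quad\text{with}\quad \eta(i)=i.
\]

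Next I would apply \Cref{CorSum} to this $\eta$. The hypotheses require that $\eta$ be nondecreasing (clear, since $\eta(i)=i$), that $n \succ \sqrt{\eta(n)} \succ 1$ (which reads $n\succ \sqrt{n}\succ 1$, true), and that $\sum_{i=1}^n \sqrt{\eta(i)} \succ \eta(n)$, i.e.\ $\sum_{i=1}^n \sqrt{i}\succ n$. The last is immediate from integral comparison, since $\sum_{i=1}^n \sqrt{i}$ grows like $n^{3/2}$. \Cref{CorSum} then yields
\[
    L(\mathbf{1}^n) \;\sim\; \gamma_+ \sum_{i=1}^n \sqrt{i}.
\]

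Finally I would pin down the asymptotic of the partial sum. Sandwiching $\sum_{i=1}^n \sqrt{i}$ between $\int_0^n\sqrt{x}\,dx$ and $\int_1^{n+1}\sqrt{x}\,dx$ gives $\sum_{i=1}^n \sqrt{i} \sim \tfrac{2}{3}n^{3/2} = \tfrac{2}{3} n\sqrt{n}$, whence $L(\mathbf{1}^n) \sim \tfrac{2}{3}\gamma_+ n\sqrt{n}$, as claimed.

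In this plan there is no real obstacle: all the analytic content (the delicate estimate of the maximal number of colorings, the extraction of the constant $\gamma_+$, and the passage from $L[n^2,n]\asymp n^2$ to $L[h,w]\sim \gamma_+ w\sqrt h$ in the regime $w\succ\sqrt h$) has already been absorbed into \Cref{ATheo} and \Cref{CorSum}. The only thing to be careful about is the correct identification of the height diagram corresponding to the hook-sum vector $\mathbf{1}^n$, and the observation that in the regime relevant here, namely $\eta(n) = n$ so that $w = n$ and $\sqrt{\eta(n)} = \sqrt{n}\prec n$, the second case of \Cref{ATheo} is the one that applies.
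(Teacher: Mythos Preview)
Your proof is correct and is exactly the intended argument: the paper states this corollary immediately after \Cref{CorSum} with no proof, so it is meant to be the direct application of \Cref{CorSum} to $\eta(i)=i$ together with $\sum_{i=1}^n\sqrt{i}\sim\tfrac{2}{3}n^{3/2}$, precisely as you have written.
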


\section{Partial order on KPF-sets}
\label{s:PosetStructures}

\subsection{Merge order}
\label{s:TeslerPoset}
\begin{definition}[\cite{ONeill}]\label{def:PosetTesler}
    To define a \new{poset on Tesler matrices}, first fix a hook sum vector $\bh$ and define a covering relation $\left(b_{ij}\right) \lessdot \left(a_{ij}\right)$ in $\T\left(\bh\right)$ iff they have the same entries except 
    $$
    \begin{gathered}
        a_{ij}=b_{ij}+1,\,a_{jk}=b_{jk}+1,\,a_{ik}=b_{ik}-1 \text{ for a unique triple } i<j<k\\ 
        \text{or }a_{ij}=b_{ij}+1,\,a_{jj}=b_{jj}+1,\,a_{ii}=b_{ii}-1 \text{ for a unique pair }i<j.
    \end{gathered}
    $$
\end{definition}

An example can be seen in \Cref{fig:tesposex}. Since $\cI(\bh)$ is in bijection with $\T(\bh)$, we can transport this partial order to the integer flow setting where it becomes the order defined by the covering relation seen in \Cref{fig:corintpos}. We refer to this partial order (in either setting) as the \new{merge} order.
\begin{figure}[h]
\begin{subfigure}{.5\textwidth}
  \centering
  \includegraphics[width=.6\linewidth]{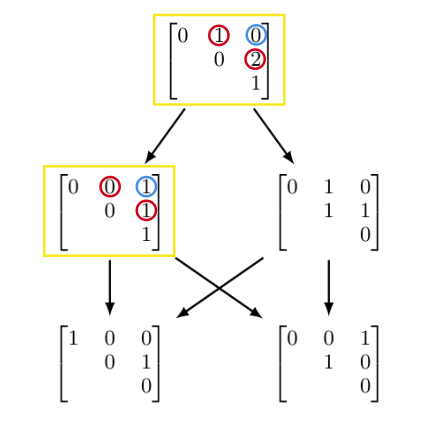}  
  \caption{The $\T\left(1,\,1,\,-1\right)$ poset}
  \label{fig:tesposex}
\end{subfigure}
\begin{subfigure}{.5\textwidth}
  \centering
  \includegraphics[width=.7\linewidth]{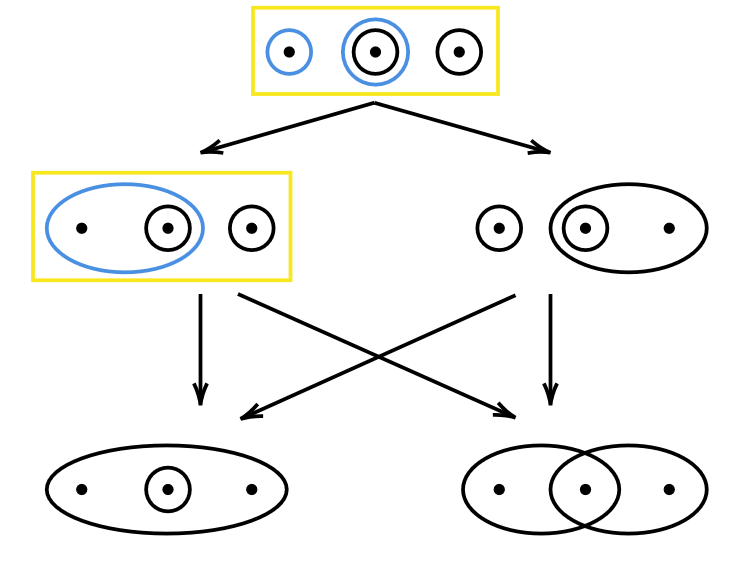}  
  \caption{The $\K\left(1,\,1,\,-1,\,-1\right)$ poset}
  \label{fig:mergposkost}
\end{subfigure}
\caption{Merge order on Tesler matrices and Kostant pictures}
\label{fig:fig}
\end{figure}
\begin{figure}[h]
    \centering
    \includegraphics[height = 0.25\textheight]{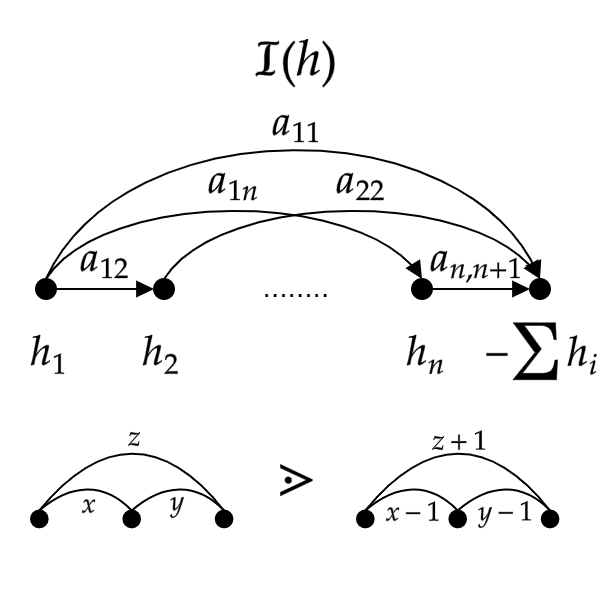}
    \caption{The corresponding integral flow poset}
    \label{fig:corintpos}
\end{figure}

Now we shall define the merge order on Kostant pictures.
\begin{definition}
\label{d:mer-ord}
    We define the \new{poset on Kostant pictures} by letting one picture be covered by another iff it can be acquire from the second one by merging two of its loops into one (\Cref{fig:mergposkost}). 
\end{definition}

We consider these partial orders because they are ranked, which makes them comfortable to work with. For example, the rank functions, mobius functions, and etc. can all be calculated.

\begin{lemma}
\label{thm:IntEqKos}
        The merge order on $\cI(\bh)$ and $\K\left(\mathbf{h},\,-\sum{h_k}\right)$ are isomorphic.
\end{lemma}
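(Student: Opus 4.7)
The plan is to use the natural bijection between $\cI(\bh)$ and $\K(\bh,-\sum h_k)$ already pinned down in Section 2.2: a flow of $f_{ij}$ on the edge $i\to j$ (for $1\le i<j\le n+1$) corresponds to $f_{ij}$ copies of the loop $\alpha_{i,j-1}$, and conversely a loop $\alpha_{pq}$ corresponds to one unit of flow on the edge $p\to(q+1)$. Once the bijection on underlying sets is in hand, the isomorphism statement reduces to checking that a covering relation on one side corresponds to a covering relation on the other.

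Before doing that, I would unify the two cases of Definition~\ref{def:PosetTesler} into a single statement in $\cI(\bh)$. Translating: the triple case $i<j<k\le n$ gives flows $f_{ij},f_{jk},f_{ik}$, and the pair case $i<j\le n$ gives flows $f_{ij},f_{j,n+1},f_{i,n+1}$ (since the diagonal entries $a_{ii}$ encode the flow into the sink vertex $n+1$). Both cases collapse into one: $F\lessdot F'$ in $\cI(\bh)$ iff there exist $1\le i<j<k\le n+1$ with $f'_{ij}=f_{ij}+1$, $f'_{jk}=f_{jk}+1$, and $f'_{ik}=f_{ik}-1$. Geometrically, going up splits one unit of ``long'' flow $i\to k$ into two units of ``short'' flow along $i\to j\to k$; going down merges two consecutive edges into one.

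I would then match this with the Kostant merge covering of Definition~\ref{d:mer-ord}. Two loops $\alpha_{pq}$ and $\alpha_{rs}$ sum to another root exactly when they are adjacent, i.e.\ when $r=q+1$, in which case
\[
\alpha_{pq}+\alpha_{q+1,s}=(e_p-e_{q+1})+(e_{q+1}-e_{s+1})=e_p-e_{s+1}=\alpha_{ps}.
\]
Under the bijection, this merge replaces one copy each of the edges $p\to(q+1)$ and $(q+1)\to(s+1)$ by one copy of the edge $p\to(s+1)$, which is precisely the down-step in the unified integral flow covering with $(i,j,k)=(p,\,q+1,\,s+1)$. Conversely, every such triangle-move on a flow arises from a merge of a unique adjacent pair of loops. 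Hence the bijection sends covering relations to covering relations, in both directions, proving the poset isomorphism.

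I do not anticipate a genuine obstacle: the whole argument is index bookkeeping, with the only slightly delicate point being the shift $\alpha_{pq}\leftrightarrow (p\to q+1)$ and the observation that the ``pair'' case of the Tesler covering is not genuinely different, it is simply the triple case with $k=n+1$ once we pass to $\cI(\bh)$. The Kostant formulation is what makes this uniformity visible.
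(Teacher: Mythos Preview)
Your proof is correct and follows essentially the same approach as the paper: verify that the established bijection carries covering relations to covering relations in both directions, with the observation that the ``pair'' case of Definition~\ref{def:PosetTesler} is just the ``triple'' case with $k=n+1$ once one passes to integral flows. Your write-up is in fact more explicit than the paper's---which relies on a figure and declares the reverse direction ``analogous''---since you spell out exactly which pairs of roots can merge ($\alpha_{pq}+\alpha_{q+1,s}=\alpha_{ps}$) and match the indices $(i,j,k)=(p,q+1,s+1)$ directly.
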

\begin{proof}
    By construction there is already a bijection. However, it is not obvious whether the partial order will hold.

    Let $i$, $k$, and $j$ be any natural numbers such that $1 \leq i < k < j \leq n$. Then consider the diagram for the integral flow with its partial order on $\cI(\bh)$. Now we construct the corresponding diagrams as Kostant pictures As we can see, the two extra loops disappear and create a larger one. This is equivalent to the partial order on $\K\left(\mathbf{h},\,-\sum{h_k}\right)$ (\Cref{MOF}).
    
    Note that it is important to check the other way around (some other posets might not work), however, in this case it's analogous. Also, consider the case $j=n+1$ as the same except in the diagrams $a_{ij}$ is $a_{ii}$ and $a_{kj}$ is $a_{kk}$.
\begin{figure}[h]
\centering
\begin{subfigure}{.6\textwidth}
  \centering
  \includegraphics[height = 0.09\textheight]{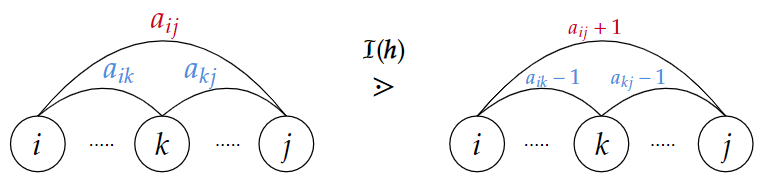}
\end{subfigure}
\hfill
\begin{subfigure}{.6\textwidth}
  \centering
  \includegraphics[height = 0.09\textheight]{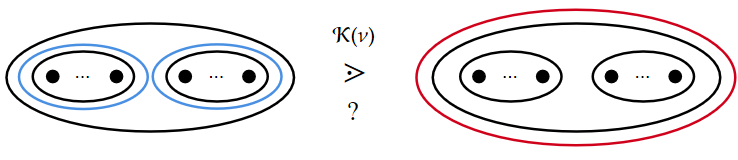}
\end{subfigure}
\caption{The merge flow covering relation correspondance}
\label{MOF}
\end{figure}
\end{proof}

\subsection{Two-sided dictionary order, how is it different?}
\label{ss:LusztigPoset}

\begin{definition}[\cite{Tingley}] 
\label{d:lus-ord}
    In order to introduce the \new{poset on Lusztig data}, the \new{two-sided dictionary} partial order on $A\left(\varnu\right)$ 
    is defined by $\mathbf{a}\le\mathbf{a'}$
    if there can be found two integers $l\leq r$ such that 
    \begin{itemize}[label = $\bullet$]
        \item $a'_l>a_l$, 
        \item $a'_r>a_r$,
        \item $a'_i=a_i$ for all $i<l$ and $i > r$.
    \end{itemize} 
\end{definition}

\begin{figure}[h]
    \centering
\begin{tikzpicture}[>=latex,line join=bevel,]
\node (node_0) at (75.0bp,8.5bp) [draw,draw=none] {$\left(0, 0, 1, 1, 0, 0\right)$};
  \node (node_1) at (75.0bp,61.5bp) [draw,draw=none] {$\left(0, 1, 0, 0, 1, 0\right)$};
  \node (node_2) at (33.0bp,114.5bp) [draw,draw=none] {$\left(\textcolor{red}{0}, 1, 0, \textcolor{red}{1}, 0, 1\right)$};
  \node (node_3) at (117.0bp,114.5bp) [draw,draw=none] {$\left(1, 0, 0, 1, 1, 0\right)$};
  \node (node_4) at (75.0bp,167.5bp) [draw,draw=none] {$\left(\textcolor{red}{1}, 0, 0, \textcolor{red}{2}, 0, 1\right)$};
  \draw [black,->] (node_0) ..controls (75.0bp,23.805bp) and (75.0bp,34.034bp)  .. (node_1);
  \draw [black,->] (node_1) ..controls (62.723bp,77.408bp) and (53.276bp,88.88bp)  .. (node_2);
  \draw [black,->] (node_1) ..controls (87.277bp,77.408bp) and (96.724bp,88.88bp)  .. (node_3);
  \draw [black,->] (node_2) ..controls (45.277bp,130.41bp) and (54.724bp,141.88bp)  .. (node_4);
  \draw [black,->] (node_3) ..controls (104.72bp,130.41bp) and (95.276bp,141.88bp)  .. (node_4);
\end{tikzpicture} 
\caption{Poset on $A\left(1,\,1,\,-1,\,-1\right)$}
\end{figure}

\begin{conjecture}[\cite{NathanWilliams2022}]
    The partial order from \Cref{d:lus-ord} and the partial order from \Cref{d:mer-ord} are equivalent.
\end{conjecture}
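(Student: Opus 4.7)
We propose the following approach. The forward implication---that a merge relation implies a Lusztig relation---is exactly \Cref{refinementTheorem}. Thus proving the conjectured equivalence reduces to the converse: if $\mathbf{a} \le_{\mathrm{Lus}} \mathbf{a}'$ in $A\left(\varnu\right)$, then the associated Kostant pictures satisfy the analogous inequality in the merge order. Since $\le_{\mathrm{Lus}}$ is generated by its cover relations, it suffices to take a single Lusztig cover $\mathbf{a} \lessdot_{\mathrm{Lus}} \mathbf{a}'$ and exhibit a (possibly multi-step) chain of merge covers between the corresponding Kostant pictures in $\K\left(\varnu\right)$.

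The first step is to classify Lusztig covers combinatorially. Given witnessing indices $l < r$ (the case $l = r$ is forbidden by weight preservation), the constraints $a'_l > a_l$, $a'_r > a_r$, $a'_i = a_i$ outside $\left[l, r\right]$, combined with $\sum_i \left(a'_i - a_i\right) \alpha^{(i)} = 0$ and the covering assumption, cut the admissible difference pattern down to a very narrow family. I expect the simplest covers to coincide exactly with the triangle exchange $\alpha_{ij} + \alpha_{jk} \leftrightarrow \alpha_{ik} + \alpha_j$ that defines a single merge cover --- this is consistent with the case $\varnu = \left(1, 1, -1, -1\right)$, where the two Hasse diagrams visibly coincide.

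For broader Lusztig covers, the plan is to build an explicit merge chain from $K\left(\mathbf{a}\right)$ to $K\left(\mathbf{a}'\right)$ by induction on the spread $r - l$ (or, alternatively, on the total discrepancy $\sum_i \left|a'_i - a_i\right|$), verifying at each step that the intermediate Kostant picture has nonnegative loop counts so that it lies in $\K\left(\varnu\right)$. The inductive step would isolate one ``extremal'' loop appearing with a different multiplicity in $K\left(\mathbf{a}\right)$ than in $K\left(\mathbf{a}'\right)$, apply a single triangle exchange to shrink the discrepancy, and appeal to the inductive hypothesis.

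The main obstacle is the structural gap between the global, highly permissive two-sided dictionary condition and the strictly local merge move: one Lusztig cover may rearrange loops across a wide window, while each merge move can only perform one triangle exchange at a time. Ensuring both that a merge chain exists and that no intermediate loop count ever becomes negative is the central combinatorial difficulty, and is where I would expect most of the work to concentrate. A fallback would be to match rank and M\"obius functions on small families and try to promote this agreement to equivalence via a rigidity argument; however, given how differently the two orders are defined, the direct chain construction outlined above appears the more promising route.
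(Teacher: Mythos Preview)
The statement you are attempting to prove is a \emph{conjecture}, and the paper does not prove it; on the contrary, \Cref{refinementTheorem} together with its accompanying counterexample (\Cref{fig:counterexample}) shows that the conjecture is false as stated. The paper's subsequent discussion---``Naturally we would like to upgrade the merging order so that it will be equivalent to the two-sided dictionary order''---makes explicit that the two orders are \emph{not} equivalent.

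Your plan to realize every Lusztig cover as a chain of merge covers therefore cannot succeed. Already for $\varnu=\left(1,1,-1,-1\right)$ the Lusztig datum $\mathbf{a}=\left(0,0,1,1,0,0\right)$ (Kostant picture $\alpha_{13}+\alpha_2$) lies strictly below $\mathbf{a}'=\left(0,1,0,0,1,0\right)$ (Kostant picture $\alpha_{12}+\alpha_{23}$) in the two-sided dictionary order, with witnesses $l=2$, $r=5$. But these two Kostant pictures are incomparable in the merge order: a merge step combines two \emph{adjacent, non-overlapping} loops $\alpha_{ij}$ and $\alpha_{j+1,k}$ into $\alpha_{ik}$, and neither picture can be reached from the other by such moves. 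In particular the merge poset on $\K\left(1,1,-1,-1\right)$ has two minimal elements, whereas the Lusztig poset on $A\left(1,1,-1,-1\right)$ has a unique minimum. Your assertion that ``the two Hasse diagrams visibly coincide'' for this $\varnu$ is therefore incorrect, and the inductive chain-construction you outline would break down exactly at this relation. The paper's own attempts to close the gap (the excess-merge and excess-merge-shift orders) also fail on small examples, underscoring that the obstruction is genuine rather than a technicality.
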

\begin{theorem}
\label{refinementTheorem}
    The poset coming from Lusztig data is a \new{refinement} (the cardinalities are equal, however, the edges are preserved only in one direction) of the poset from $\cK(\varnu)$.
\end{theorem}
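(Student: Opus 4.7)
The plan is to verify that each covering relation of the merge order on $\K(\varnu)$ is already a relation in the two-sided dictionary order on $A(\varnu)$, under the standard identification of a Kostant picture with its Lusztig datum. Since the underlying sets are in bijection, the cardinalities agree automatically, and this implication exhibits the Lusztig poset as a refinement of the merge poset in the precise sense stated in the theorem.

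First I would unpack what a merge cover means at the level of Lusztig coefficients. A cover $P \lessdot Q$ in the merge order says that $P$ arises from $Q$ by combining two loops of $Q$ into a single loop. For two roots $\alpha_{ij}$ and $\alpha_{pq}$ the sum $\alpha_{ij}+\alpha_{pq}=(e_i-e_{j+1})+(e_p-e_{q+1})$ is again a single root precisely when $p=j+1$, in which case it equals $\alpha_{iq}$. So the merge in $Q$ must be between a consecutive pair $\alpha_{ij}$ and $\alpha_{j+1,k}$ producing $\alpha_{ik}$, with $i \le j < j+1 \le k$. In Lusztig coordinates, $P$ and $Q$ therefore agree everywhere except at the three positions $(i,j)$, $(j+1,k)$, $(i,k)$, where
\begin{equation*}
a^P_{ij} = a^Q_{ij} - 1, \qquad a^P_{j+1,k} = a^Q_{j+1,k} - 1, \qquad a^P_{ik} = a^Q_{ik} + 1.
\end{equation*}

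Next I would compare these three positions in the standard linear order the paper uses to list the entries of a Lusztig datum, namely lexicographic order on pairs $(i,j)$ with $i \le j$. The inequalities $i \le j < j+1 \le k$ immediately yield
\begin{equation*}
(i,j) \;<\; (i,k) \;<\; (j+1,k),
\end{equation*}
so the leftmost and rightmost coordinates at which $P$ and $Q$ differ are $(i,j)$ and $(j+1,k)$, and at both of these $Q$ strictly exceeds $P$; the middle difference at $(i,k)$ goes the opposite way but lies strictly between them, which Definition~\ref{d:lus-ord} explicitly permits. Taking $l=(i,j)$ and $r=(j+1,k)$ in that definition gives $P \le Q$ in the two-sided dictionary order. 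A quick check confirms that the boundary cases $i=j$ (one merged loop is the simple root $\alpha_{ii}$) and $k=j+1$ (the other is $\alpha_{j+1,j+1}$) leave the linear ordering of the three affected indices unchanged, so no separate treatment is needed. Transitivity then propagates this containment of covers to the containment of all comparable pairs.

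The main danger is purely bookkeeping, namely tracking the off-by-one between the flow/Tesler indexing on $n+1$ vertices and the Kostant/Lusztig indexing of roots on $n$ vertices. I would sidestep this entirely by working on the Kostant/Lusztig side throughout and appealing to \Cref{thm:IntEqKos} only if the Tesler formulation is ever needed.
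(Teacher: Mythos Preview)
Your argument is correct and follows the same route as the paper's proof: identify a merge cover as the replacement of $\alpha_{ij}$ and $\alpha_{j+1,k}$ by $\alpha_{ik}$, then observe that in the standard order the three affected positions satisfy $(i,j)<(i,k)<(j+1,k)$, so the two-sided dictionary condition holds with $l=(i,j)$ and $r=(j+1,k)$. You are in fact more explicit than the paper about why $\alpha_{ij}+\alpha_{pq}$ is a root only when $p=j+1$, and about the boundary cases $i=j$ and $k=j+1$.

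One omission: the theorem's parenthetical ``edges are preserved only in one direction'' is meant as a strict refinement, and the paper certifies this by exhibiting a Lusztig relation that is not a merge relation for $\varnu=(1,1,-1,-1)$. Your proposal establishes the containment of relations but does not address strictness; a single example such as the comparison of $(0,1,0,1,0,1)$ and $(1,0,0,1,1,0)$ in $A(1,1,-1,-1)$, which are Lusztig-comparable but not merge-comparable, would complete the statement as phrased.
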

\begin{proof}
   Let's consider the merge order on Kostant pictures and read off its Lusztig data. We get that $(...,\alpha_{ij},...,\alpha_{ik},...,\alpha_{j+1k},...) \succeq (...,\alpha_{ij}-1,...,\alpha_{ik}+1,...,\alpha_{j+1k}-1,...)$ under $\cK(\varnu)$. Additionally, the partial order holds under $A(\varnu)$ (\Cref{fig:corluszdat}). That means every covering in the Kostant pictures is a covering in the Lusztig data. However, the reverse isn't true. An example can be shown for $\varnu = (1,1,-1,-1)$ (\Cref{fig:counterexample}).
\begin{figure}[h]
    \centering
    \includegraphics[height = 0.175\textheight]{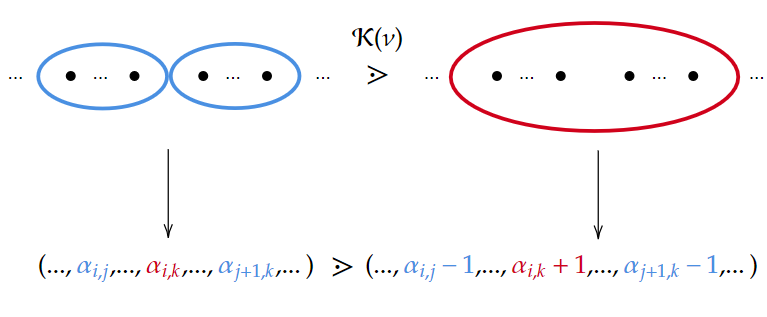}
    \caption{Merge order induced on Lusztig data}
    \label{fig:corluszdat}
\end{figure}
\begin{figure}[h]
    \centering
    \includegraphics[height = 0.175\textheight]{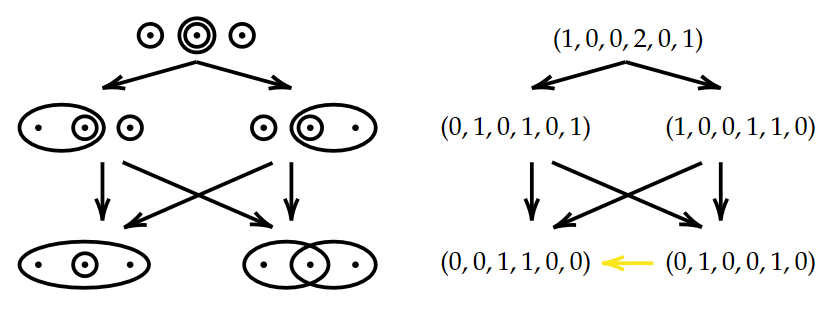}
    \caption{Hasse diagrams for $\varnu = (1,1,-1,-1)$}
    \label{fig:counterexample}
\end{figure}
\end{proof}
\begin{corollary}
The partial order on $A\left(\mathbf{h},\,-\sum{h_k}\right)$ refines the partial order on $\cT\left(\bh\right)$.
\end{corollary}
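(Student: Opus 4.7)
The plan is to obtain the corollary purely by composing the bijections and the results already established, rather than reproving anything from scratch. The target claim compares two posets that live on sets of different names ($A(\mathbf{h},-\sum h_k)$ and $\mathcal{T}(\mathbf{h})$), so the first task is to exhibit the natural bijection between them and then show that every covering relation on the Tesler side lifts to a relation on the Lusztig side under this bijection.

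Concretely, I would chain three identifications. First, the Tesler poset on $\mathcal{T}(\mathbf{h})$ is, by the definition of the merge order, transported along the explicit map $A = (a_{ij}) \mapsto \text{integral flow}$ described at the end of \Cref{ss:T+I}, giving an isomorphism of posets $\mathcal{T}(\mathbf{h}) \cong \mathcal{I}(\mathbf{h})$ as merge posets. Second, \Cref{thm:IntEqKos} provides an isomorphism of posets $\mathcal{I}(\mathbf{h}) \cong \mathcal{K}(\mathbf{h},-\sum h_k)$ between the merge orders on integral flows and Kostant pictures. Composing these two isomorphisms yields a poset isomorphism between the Tesler poset $\mathcal{T}(\mathbf{h})$ and the merge poset on Kostant pictures $\mathcal{K}(\mathbf{h},-\sum h_k)$. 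Third, \Cref{refinementTheorem} asserts that the two-sided dictionary order on $A(\mathbf{h},-\sum h_k)$ refines the merge order on $\mathcal{K}(\mathbf{h},-\sum h_k)$ in the precise sense that the underlying sets are in bijection and each covering in the merge order is a relation in the Lusztig order.

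Combining these, any covering relation $B \lessdot A$ in $\mathcal{T}(\mathbf{h})$ corresponds, through the composite bijection, to a covering in the Kostant merge poset, which in turn becomes a (possibly non-covering) relation in the Lusztig data poset. Since the cardinalities along the whole chain agree by the identity $|\mathcal{T}(\mathbf{h})| = |\mathcal{I}(\mathbf{h})| = \KPF(\mathbf{h},-\sum h_i) = |A(\mathbf{h},-\sum h_k)|$, this is exactly the definition of refinement given in \Cref{refinementTheorem}. No genuine new combinatorial content is required.

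The only real point that needs verification is that the two bijections being composed are indeed the natural ones, so that the isomorphism $\mathcal{T}(\mathbf{h}) \cong \mathcal{K}(\mathbf{h},-\sum h_k)$ coincides with the identification used in \Cref{refinementTheorem}; this is immediate from the explicit description $a_{ij} \leftrightarrow$ (flow from $i$ to $j$) $\leftrightarrow$ (number of loops $\alpha_{ij}$). Thus there is no serious obstacle, and I would present the proof as a one-line composition: apply \Cref{thm:IntEqKos} to identify the Tesler poset with the Kostant merge poset, then apply \Cref{refinementTheorem}.
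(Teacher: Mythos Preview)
Your proposal is correct and matches the paper's approach: the corollary is stated without proof in the paper precisely because it follows immediately by composing \Cref{thm:IntEqKos} (which identifies the Tesler/integral-flow merge poset with the Kostant merge poset) with \Cref{refinementTheorem}. You have simply made explicit the one-line composition the paper leaves implicit.
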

Naturally we would like to upgrade the merging order so that it will be equivalent to the two-sided dictionary order. Although an intuitive new order hasn't been made, there were some ideas below.
\begin{itemize}[label = $\bullet$]
    \item The \new{excess-merge} order works on cases similar to \Cref{fig:counterexample} by allowing any two intersecting loops cover two replaced loops --- one is the intersection and the other is the union. However, there exists a counterexample, for example $\bh = (1,1,-1,-1)$ (\Cref{fig:exmergecounter}).
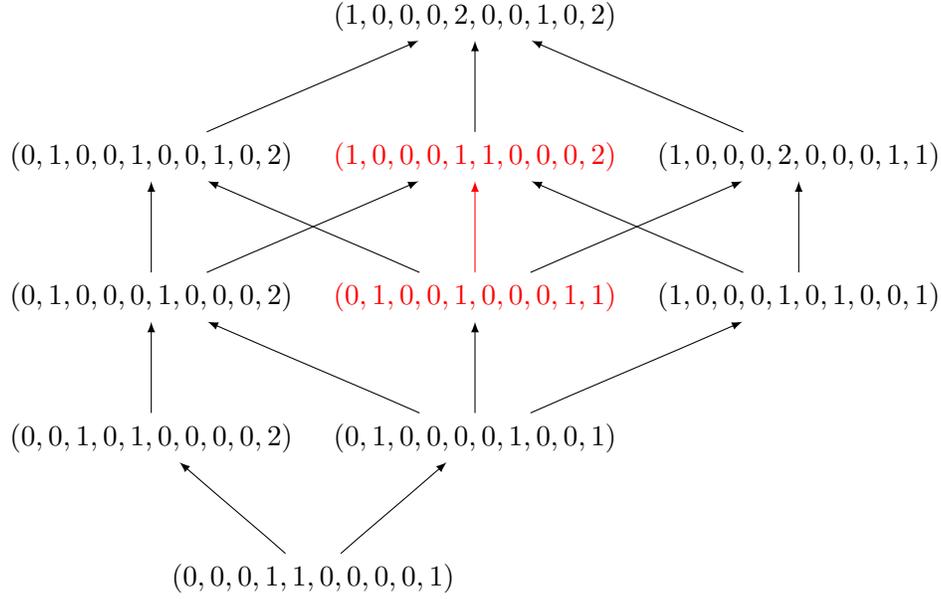
\begin{figure}[h]
    \centering
\begin{tikzpicture}[>=latex,line join=bevel,]
\node (node_0) at (113.0bp,8.5bp) [draw,draw=none] {$\left(0, 0, 0, 1, 1, 0, 0, 0, 0, 1\right)$};
  \node (node_1) at (52.0bp,61.5bp) [draw,draw=none] {$\left(0, 0, 1, 0, 1, 0, 0, 0, 0, 2\right)$};
  \node (node_2) at (174.0bp,61.5bp) [draw,draw=none] {$\left(0, 1, 0, 0, 0, 0, 1, 0, 0, 1\right)$};
  \node (node_3) at (52.0bp,114.5bp) [draw,draw=none] {$\left(0, 1, 0, 0, 0, 1, 0, 0, 0, 2\right)$};
  \node (node_4) at (174.0bp,114.5bp) [draw,draw=none] {$\textcolor{red}{\left(0, 1, 0, 0, 1, 0, 0, 0, 1, 1\right)}$};
  \node (node_6) at (296.0bp,114.5bp) [draw,draw=none] {$\left(1, 0, 0, 0, 1, 0, 1, 0, 0, 1\right)$};
  \node (node_5) at (52.0bp,167.5bp) [draw,draw=none] {$\left(0, 1, 0, 0, 1, 0, 0, 1, 0, 2\right)$};
  \node (node_7) at (174.0bp,167.5bp) [draw,draw=none] {$\textcolor{red}{\left(1, 0, 0, 0, 1, 1, 0, 0, 0, 2\right)}$};
  \node (node_8) at (296.0bp,167.5bp) [draw,draw=none] {$\left(1, 0, 0, 0, 2, 0, 0, 0, 1, 1\right)$};
  \node (node_9) at (174.0bp,220.5bp) [draw,draw=none] {$\left(1, 0, 0, 0, 2, 0, 0, 1, 0, 2\right)$};
  \draw [black,->] (node_0) ..controls (94.809bp,24.709bp) and (80.32bp,36.823bp)  .. (node_1);
  \draw [black,->] (node_0) ..controls (131.19bp,24.709bp) and (145.68bp,36.823bp)  .. (node_2);
  \draw [black,->] (node_1) ..controls (52.0bp,76.805bp) and (52.0bp,87.034bp)  .. (node_3);
  \draw [black,->] (node_2) ..controls (135.82bp,78.462bp) and (102.86bp,92.241bp)  .. (node_3);
  \draw [black,->] (node_2) ..controls (174.0bp,76.805bp) and (174.0bp,87.034bp)  .. (node_4);
  \draw [black,->] (node_2) ..controls (212.18bp,78.462bp) and (245.14bp,92.241bp)  .. (node_6);
  \draw [black,->] (node_3) ..controls (52.0bp,129.81bp) and (52.0bp,140.03bp)  .. (node_5);
  \draw [black,->] (node_3) ..controls (90.184bp,131.46bp) and (123.14bp,145.24bp)  .. (node_7);
  \draw [black,->] (node_4) ..controls (135.82bp,131.46bp) and (102.86bp,145.24bp)  .. (node_5);
  \draw [red,->] (node_4) ..controls (174.0bp,129.81bp) and (174.0bp,140.03bp)  .. (node_7);
  \draw [black,->] (node_4) ..controls (212.18bp,131.46bp) and (245.14bp,145.24bp)  .. (node_8);
  \draw [black,->] (node_5) ..controls (90.184bp,184.46bp) and (123.14bp,198.24bp)  .. (node_9);
  \draw [black,->] (node_6) ..controls (257.82bp,131.46bp) and (224.86bp,145.24bp)  .. (node_7);
  \draw [black,->] (node_6) ..controls (296.0bp,129.81bp) and (296.0bp,140.03bp)  .. (node_8);
  \draw [black,->] (node_7) ..controls (174.0bp,182.81bp) and (174.0bp,193.03bp)  .. (node_9);
  \draw [black,->] (node_8) ..controls (257.82bp,184.46bp) and (224.86bp,198.24bp)  .. (node_9);
    \end{tikzpicture}
        \caption{Excess-merge order counterexample with covering in red}
        \label{fig:exmergecounter}
    \end{figure}
    \item Taking into account \Cref{fig:exmergecounter}, the \new{excess-merge-shift} order also allows three (or more) adjacent loops shifting to form two (or more) adjacent loops. In this case, there also exists a counterexample, $\bh = (2,1,2,1)$ .
\end{itemize}

Additionally, it should be noted that, unlike the merge orders, the two-sided dictionary order is global. In other words, each covering relation depends on other values (in this case --- the outermost elements), which are not changed during the covering.

In the representation theory of canonical bases (which are in particular crystals and therefore indexed by Lusztig data) the two-sided dictionary order is used to show that Lusztig's canonical basis is in fact a basis. This is done by showing that the change of basis from a standard (PBW) basis $\{F^a\}$ to Lusztig's canonical basis $\{\Bar{F}^a\}$ is unitriangular with respect to the two-sided dictionary order (\Cref{thm:changeofbasis}).
\begin{theorem}[\cite{Tingley}]\label{thm:changeofbasis}
For every Lusztig data $a$,
\begin{align}
    \Bar{F}^{a} = F^{a} + \sum_{a' \prec a}^{} p^{a}_{a'}(q) F^{a'}, \nonumber
\end{align}
where the $p^{a}_{a'}(q)$ are Laurent polynomials in $q$.
\end{theorem}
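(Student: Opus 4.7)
The plan is to adapt the standard Lusztig--Kashiwara construction of the canonical basis via the bar involution $\overline{\cdot}$ on the quantum group, which fixes each Chevalley generator $F_i$ and sends $q\mapsto q^{-1}$. The argument splits into two conceptual steps: first, establish that this involution acts unitriangularly on the PBW basis $\{F^a\}$ with respect to the two-sided dictionary order; second, apply the standard Kazhdan--Lusztig-type fixed-point argument to produce $\bar{F}^a$ and extract the Laurent polynomials $p^{a}_{a'}(q)$.

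For the unitriangularity, recall that $F^a = F_{\beta_1}^{(a_1)}\cdots F_{\beta_N}^{(a_N)}$, where $\beta_1,\dots,\beta_N$ is the convex order on positive roots induced by the fixed reduced word, i.e.\ the one identifying Lusztig data with tuples in the order appearing in \Cref{d:lus-ord}. Computing $\overline{F^a}$ reduces to re-expanding a product of root vectors taken in the reverse order back into PBW form. The essential input is the Levendorskii--Soibelman straightening formula: for $\alpha<\beta$ adjacent in the convex order,
\begin{equation*}
F_\beta F_\alpha - q^{(\alpha,\beta)} F_\alpha F_\beta \;=\; \sum c_\gamma \, F_{\gamma_1}^{(m_1)}\cdots F_{\gamma_k}^{(m_k)},
\end{equation*}
where the sum is over tuples of roots lying strictly between $\alpha$ and $\beta$ in the convex order, with $c_\gamma \in \Z[q,q^{-1}]$. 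The crucial combinatorial observation is that every application of this straightening produces a PBW monomial $F^{a'}$ with $a'\prec a$ in the two-sided dictionary order: because the correction roots are strictly interior, the extremal coordinates at which $a$ and $a'$ differ strictly decrease, while coordinates outside the swap interval are untouched --- exactly matching \Cref{d:lus-ord}. Iterating straightening moves yields $\overline{F^a} = F^a + \sum_{a'\prec a} c_{a,a'}(q)\, F^{a'}$ with $c_{a,a'}(q)\in\Z[q,q^{-1}]$, which is the desired unitriangularity.

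The second step is the usual fixed-point argument. Given a bar involution that is unitriangular with respect to a partial order whose downsets are finite, an inductive construction produces a unique bar-invariant element $\bar{F}^a = F^a + \sum_{a'\prec a} p^{a}_{a'}(q) F^{a'}$ with coefficients in $q^{-1}\Z[q^{-1}]$. The recursion determining $p^{a}_{a'}$ involves only the $c_{a,a''}$ and previously constructed $p^{a''}_{a'}$ combined by additions, subtractions, and symmetrizations across $q \leftrightarrow q^{-1}$, so the coefficients remain in $\Z[q,q^{-1}]$.

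I expect the hardest part to be the first step: matching the Levendorskii--Soibelman straightening to the specific \emph{two-sided} dictionary order, as opposed to a lexicographic order for which triangularity is much easier to read off. The subtlety is that each straightening move touches both endpoints of the commuted pair, not just one, and one must argue that no iteration of the straightening can push the resulting tuple outside the open interval $(a', a)$ in the sense of \Cref{d:lus-ord}. This hinges on the convexity of the chosen ordering of positive roots and on the fact that all correction roots in the formula lie strictly in the open root-theoretic interval between the two roots being swapped.
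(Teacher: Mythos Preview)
The paper does not supply its own proof of this theorem: it is stated with the attribution \texttt{[\textbackslash cite\{Tingley\}]} and used as a black box to motivate the question of whether the change-of-basis unitriangularity survives under coarsenings of the two-sided dictionary order. There is therefore no in-paper argument to compare against.

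That said, your proposal is the standard Lusztig argument and is essentially what one finds in the cited reference. The two-step structure (Levendorskii--Soibelman straightening gives unitriangularity of the bar involution on the PBW basis; then the Kazhdan--Lusztig fixed-point recursion produces the bar-invariant elements) is correct, and your identification of why the correction terms in the straightening formula land strictly below in the \emph{two-sided} dictionary order --- because the interior roots leave both extremal coordinates strictly smaller while everything outside the swap interval is untouched --- is exactly the point. One small clarification you should make explicit: computing $\overline{F^a}$ is not literally ``root vectors in reverse order'' but rather the PBW monomial for the \emph{opposite} reduced word (up to a power of $q$), since the bar involution intertwines $T_i$ with $T_i^{-1}$; the passage back to the original PBW basis is then a composition of elementary braid moves, each of which is governed by a rank-$2$ Levendorskii--Soibelman computation. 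This does not change the structure of your argument, but it is where the convexity of the root ordering is actually invoked.
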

The orders we consider lead us to ask if the change of basis holds true for any coarsening of the two-sided dictionary order.

\section{Probabilistic Tesler Matrices}
\label{s:ProbabilisticTesler}

\subsection{Introduction to Markov chains}
\label{s:IntroMarkovChain}

Consider a system of $n$ states and probabilities $t_{ij}$ (going from state $i$ to $j$) connecting them. The sum of all probabilities going from a state is equal to $1$. A step consists of going from one state to another randomly. This system is otherwise known as a \new{Markov chain}.

\begin{definition}
    The \new{Markov property} states that the probability for the next state after the $s+1$ step is solely based on its position after $s$ steps.
\end{definition}

A Markov chain can be represented as a graph and matrix (\Cref{fig:trmatrix}).

\begin{figure}[h]
    \centering
    \includegraphics[height = 0.2\textheight]{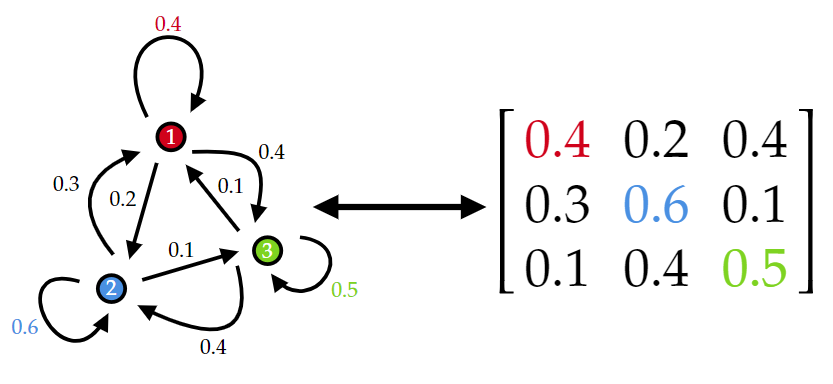}
    \caption{Graph and matrix representations of a Markov chain}
    \label{fig:trmatrix}
\end{figure}

\begin{definition}
    Matrix $T$, otherwise known as the \new{transition matrix}, has elements $t_{ij}$, which are the same as the probabilities, that are located in row $i$ and column $j$.
\end{definition}

\begin{definition}
    Our \new{state vector}, $V_{s}$, is the distribution between the states after $s$ steps. When $s = 0$, the state vector is \new{initial}. Each state vector follows the condition$$V_s = V_0 \cdot T^{s}.$$
\end{definition}


\begin{definition}
    In some cases, when $s \xrightarrow{} \infty$, all of the elements in $T^{s}$ converge. We call this the \new{equilibrium matrix} $E$. As a corollary,$$E=ET.$$
\end{definition}

\begin{example}
    Consider \Cref{fig:trmatrix}. The equilibrium matrix $E$ is equal to
    
\centering
$\begin{bmatrix}
\frac{4}{15}&\frac{13}{30}&\frac{3}{10}\\
\frac{4}{15}&\frac{13}{30}&\frac{3}{10}\\
\frac{4}{15}&\frac{13}{30}&\frac{3}{10}\\
\end{bmatrix}$   
\end{example}

\subsection{Markov chain on the Tesler poset}
\label{s:ProbabilityAlgorithms}

To establish probabilities on the elements, we need to define the probabilities of doing a step. Also, to effectively go through the entire poset, it is best to start at the maximal element (since there is only one) and do a covering only downwards, if possible.

The algorithm (\Cref{apx:pseudocode}) $\mathbf{inputs}$ the size of the matrix $s$, the number of random steps $r$, the number of overall trials $t$, and the hook sum $\mathbf{h}$. Each random step allows the matrix to go down a covering or stay. The $\mathbf{output}$ consists of: the \new{probability distribution over all ranks}, the \new{average ranks}, the \new{rank ratios} $\left(\frac{\text{rank}-1}{\text{range}}\right)$, and the \new{average step efficiency} (a step that goes down is effective and we count until a minimum element is reached) after $s$ steps.

Each step in the algorithm first consists of taking a random element in the upper-right part of the matrix. If it is on the main diagonal, then another element is chosen on the same diagonal and a downward covering is checked. Otherwise, we record the coordinates of the initial element $\left(a_x,\,a_y\right)$ and then choose a number from the range $1 \leq \dots < a_y < \dots < a_x < \dots \leq s$. Those 3 numbers are the same as $i,j,k$ in definition \Cref{def:PosetTesler}, from which we can decide whether a downward covering will work.

Let us consider the matrices $\left(1,\,1,\,1\right),\,\left(1,\,2,\,1\right),\,\left(3,\,1,\,2\right)$ as examples. Overall, the rank distribution should go to $1$ when $r$, the number of random steps, goes to infinity. \Cref{fig:rankdist} shows us how this would look like when $r$ is increased linearly.
Since the distribution gets shifted to $1$, when $r\to\infty$, the average step efficiency reaches a certain limit. For the earlier used matrices, we get $0.404788$, $0.359631$, and $0.428762$, respectively. 
Lastly, different matrices can be compared by looking at their rank ratio graphs. Each of these graphs start at $1$, when $r=0$, and a larger poset corresponds to a ``higher'' graph. This can all be seen on \Cref{fig:rankratio}. Also, each of these graphs grow similar to logarithmic graph.

%

\begin{figure}[h]

\centering

\begin{minipage}[b]{.5\textwidth}
\centering
\includegraphics[width=.9\textwidth]{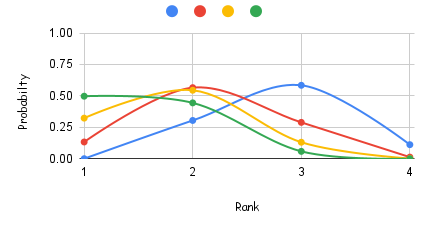}
\caption*{$\textcolor{blue}{2}$, $\textcolor{red}{4}$, $\textcolor{yellow}{6}$, $\textcolor{green}{8}$ random steps}
\end{minipage}%
\begin{minipage}[b]{.5\textwidth}
\centering
\includegraphics[width=.9\textwidth]{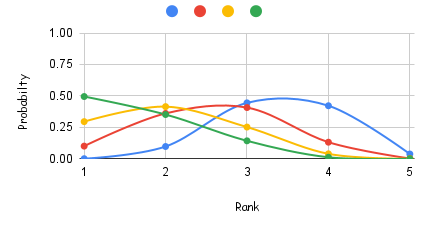}
\caption*{$\textcolor{blue}{3}$, $\textcolor{red}{6}$, $\textcolor{yellow}{9}$, $\textcolor{green}{12}$ random steps}
\end{minipage}

\bigskip

\begin{minipage}{.5\textwidth}
\centering
\includegraphics[width=.9\textwidth]{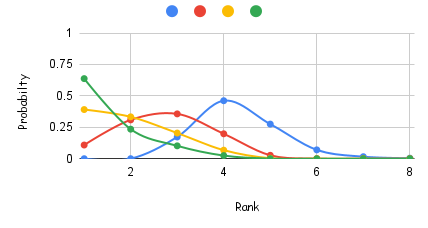}
\caption*{$\textcolor{blue}{5}$, $\textcolor{red}{10}$, $\textcolor{yellow}{15}$, $\textcolor{green}{20}$ random steps}
\end{minipage}

\caption{Rank distributions of matrices ($1,1,1$), ($1,2,1$), and ($3,1,2$), from top to right bottom}
\label{fig:rankdist}

\end{figure}

\begin{figure}[h]
    \centering
    \includegraphics[height = 0.3\textheight]{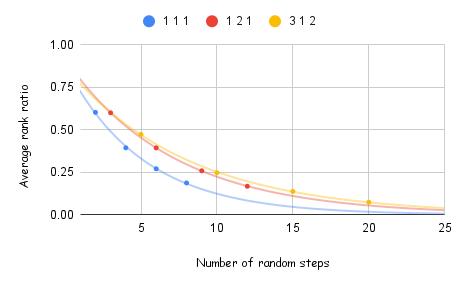}
    \caption{Rank ratio graphs with their logarithmic trends}
    \label{fig:rankratio}
\end{figure}


\section*{Future questions}
Finally, we shall formulate some questions that lack exploration in the paper. 

An obvious one is the problem of finding $\gamma_+$. It is probable that there is a cunning trick to find it at least for some height, expanding for every other. What about the case when $w\asymp\sqrt{h}$? We know $L\left[h,\,w\right]\asymp h\left(\asymp w\sqrt{h}\right)$, but is there a constant $\gamma_0$ as we have for other cases? Does the answer lie in the unification of $\gamma_+w\sqrt{h}$ and $w^2\ln\left(\frac{\sqrt{h}}{w}\right)$? And if we can complete the $\left[h,\,w\right]$ asymptotics, can we generalize \Cref{CorSum}?

Another question that arrises two-sided dictionary order is whether can be intuitively changed on the Kostant pictures in order to be equivalent to the Tesler poset. Or, whether the two-sided dictionary order can be reduced while still showing that Lusztig's canonical basis is a basis.

From the Markov chain on the Tesler matrices, an important question is how can we easily predetermine this data, such as the average step efficiency limit and the rank ratio graph? In other words, given the Tesler poset for a fixed $\mathbf{h}$, is there is a simple way to figure out these values?

\begin{figure}[h]
    \centering
    \includegraphics[height = 0.15\textheight]{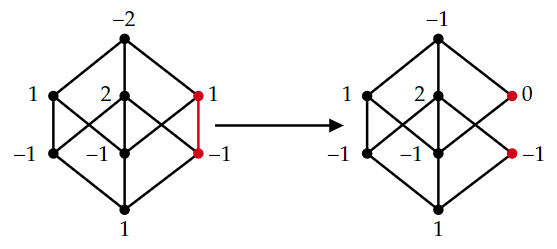}
    \caption{$\max_1\left|\mu\left(\hat{0},\,A\right)\right| = \max_2\left|\mu\left(\hat{0},\,A\right)\right| = 2$}
    \label{fig:mobiusalgorithm}
\end{figure}

It has been proposed \cite{ONeill} to use the Möbius function (its maximum absolute value) in order to analyze cardinality of $\T\left(\mathbf{1}^n\right)$. A question one may ask is whether this characteristic can be computed efficiently without generating the entire poset. Specifically, one can see that some edges, when removed, barely influence it (these would be more ``isolated'', \Cref{fig:mobiusalgorithm}). Is it possible to create an algorithm that would generate only an important part of a poset, leaving the ``isolated'' edges out of sight?



\section*{Acknowledgments}
\label{s:Acknowledgments}

We would like to thank Anne Dranowski for mentoring us, her guidance and support. We also thank Pavel Etingof, Slava Gerovitch, Dmytro Matvieievskyi, and the Yulia’s Dream program for providing this invaluable opportunity for us to do research. We are grateful to Nathan Williams, Alejandro Morales, and GaYee Park for their helpful contributions and suggestions to our work.
\section*{Appendix A: Asymptotics}
\label{apx:Asymptotics}
\begin{definition}
    Let $f,\,g$ be two integer/real-valued positive functions. As $x\to\infty$, we write
    \begin{enumerate}[label=(\roman*)]
        \item $f\sim g$ iff $f/g\to 1$;
        \item $f\prec g$ ($g\succ f$) iff $f/g\to0$;
        \item $f\asymp g$ iff $ag\leq f\leq bg$ for some $a,\, b\in\R^+$;
        \item $f\precsim g$ ($g\succsim f$) iff $f\leq ag$ for every $a>1$;
        \item $f\preccurlyeq g$ ($g\succcurlyeq f$) iff $f\leq ag$ for some $a\in\R^+$.
    \end{enumerate}
\end{definition}

\newpage
\section*{Appendix B: Pseudocode}
\label{apx:pseudocode}

\RestyleAlgo{ruled}

\SetKwComment{Comment}{/* }{ */}

\begin{algorithm}[ht!]
\label{alg:onewayTesMat}
\small

\caption{\small One way random Tesler matrix}

\LinesNumbered

\SetAlgoVlined

\KwData{Size of the matrix $s$, number of random steps $r$, the hook sum vector $\mathbf{h}$.}
\KwResult{Rank distribution, average rank ratio and average step efficiency.}
$s \gets $ size of the matrix;

$r \gets $ number of random steps;

$t \gets $ number of trials\Comment*[r]{$t$ is large in order to have accurate data}

\For{$1 \leq i \leq s$}{

$h_i \gets $ corresponding hook value input;

}
Calculate $\text{hook\_sum}$ and $\text{max\_sum}$ (sum of the elements in the maximal matrix);

Initialize $\text{rank\_array}[i]$ that holds the number of times the final element was at each rank;

\For{$t$ trials}{

\For {$r$ random steps}{
        
$\text{current\_sum}\gets$ the sum of all the elements in the matrix;

\If{$\text{current\_sum}=\text{hook\_sum}$}{
    The current trial reached the bottom of the poset;
}

$a_x \gets  x$ coordinate of randomly chosen element;

$a_y \gets y$ coordinate of randomly chosen element\Comment*[r]{$a_x \geq a_y$}

\If{$a_x = a_y$}{

$b_x \gets x$  coordinate of a different randomly chosen element on the diagonal;

$b_y \gets y$  coordinate of a different randomly chosen element on the diagonal;

\If{covering does not work}{

\If{we have not reached the end}{

$\text{unsuccessful\_attempts}\pp;$

}
Go back and do the next random step;

}

Do the covering and do the next random step;
}

Randomly choose a number from $1 \leq \dots < a_y < \dots < a_x < \dots \leq s;$

Rename coordinates $a_x,\,a_y$ and the number as $i,\,j,\,k;$

\If{covering does not work}{

\If{we have not reached the end}{

$\text{unsuccessful\_attempts}\pp;$
}
Go back and do the next random step;
}
Do the covering and do the next random step;
}

$\text{final\_sum} \gets $ the sum of all the elements in the matrix after $r$ randoms steps;

$\text{useful\_steps}=\text{max\_sum} - \text{final\_sum};$

 $\text{step\_efficiency} =\frac{\text{useful\_steps}}{\text{useful\_steps} + \text{unsuccessful\_attempts}};$
 
$\text{current\_rank} = \text{final\_sum} - \text{hook\_sum} + 1;$

$\text{rank\_array}[\text{current\_rank}]\pp;$

}
Calculate the average step efficiency over $t$ trials;

\For{1 $\leq i\leq$ $\left(\text{max\_sum} - \text{hook\_sum} + 1\right)$}{

rank $i$ probability $\gets\frac{\text{rank\_array}[i]}{t};$

}

Calculate $\text{average\_rank\_value};$

average rank ratio $\gets \frac{\text{average rank value} - 1}{\text{max\_sum} - \text{hook\_sum}};$

\end{algorithm}


\bibliographystyle{alpha}
\bibliography{tesler.bib}

\end{document}